\tikzset{join/.code=\tikzset{after node path={%
\ifx\tikzchainprevious\pgfutil@empty\else(\tikzchainprevious)%
edge[every join]#1(\tikzchaincurrent)\fi}}}
\tikzset{>=stealth',every on chain/.append style={join}, every join/.style={->}}
\numberwithin{equation}{section}
\theoremstyle{plain}
\newtheorem{theorem}[equation]{Theorem}
\newtheorem{lemma}[equation]{Lemma}
\newtheorem{proposition}[equation]{Proposition}
\newtheorem{corollary}[equation]{Corollary}
\theoremstyle{definition}
\newtheorem{definition}[equation]{Definition}
\newtheorem*{convention}{Convention}
\theoremstyle{remark}
\newcommand{\mc}[1]{\mathcal{#1}}
\newcommand{\mf}[1]{\mathfrak{#1}}
\newcommand{\op}[1]{\operatorname{#1}}
\newcommand{\st}{\text{ } \big| \text{ }}
\newcommand{\inv}{^{-1}}
\newcommand{\Z}{\mathbb{Z}} \newcommand{\N}{\mathbb{N}}
\renewcommand{\H}{\operatorname{H}}
\newcommand{\Hom}{\operatorname{Hom}}
\newcommand{\End}{\operatorname{End}}
\newcommand{\Ext}{\operatorname{Ext}}
\newcommand{\coker}{\operatorname{coker}}
\newcommand{\define}[1]{\emph{#1}}
\newcommand{\U}[1]{U(\mf{#1})}
\newcommand{\ev}[1]{#1 \mathrm{_{\overline{0}}}}
\newcommand{\od}[1]{#1 \mathrm{_{\overline{1}}}}
\newcommand{\supalg}[1]{\ev{#1} \oplus \od{#1}}
\newcommand{\sy}[2]{\Omega^{#1}(#2)}
\newcommand{\gen}[1]{ \langle #1 \rangle}
\newcommand{\Ind}{\op{Ind}}
\newcommand{\rel}[1]{( \mf{#1}, \ev{\mf{#1}})}
\newcommand{\frel}[1]{\mc{F}_{\rel{#1}}}
\newcommand{\ind}[1]{\U{#1} \otimes_{U(\ev{\mf{#1}})}}
\newcommand{\urel}[1]{( U(\mf{#1}), U(\ev{\mf{#1}}))}
\newcommand{\uplusrel}[1]{( U(\mf{#1}^+), U(\ev{\mf{#1}}))}
\newcommand{\F}[1]{\mc{F}_{\rel{#1}}}
\begin{document}
\title{%
Endotrivial Modules for the General Linear Lie Superalgebra }
\author{Andrew J. Talian }
\thanks{Research of the author was partially supported
by NSF grant DMS-0738586.}
\address{Department of Mathematics\\ University of Georgia \\
Athens\\ GA 30602, USA}
\curraddr{Department of Mathematics \\ Concordia College \\ Moorhead \\ MN 56562, USA}
\email{atalian@cord.edu} 
\date{June 2013}

\begin{abstract}
If $\mf{g} = \supalg{\mf{g}}$ is a Lie superalgebra over an algebraically
closed field $k$ of characteristic 0, the notion of an endotrivial module
has recently been extended to
$\mf{g}$-modules by defining $M$ to be endotrivial if
$\Hom_k(M,M) \cong k_{ev} \oplus P$
as $\mf{g}$-supermodules.  Here, $k_{ev}$ denotes the trivial module concentrated
in degree $\overline{0}$ and $P$ is a $\urel{g}$-projective supermodule.
In the stable module category, these modules form a group under 
the tensor product.  If $T(\mf{g})$ denotes the group of
endotrivial $\mf{g}$-modules, it is interesting and useful to
identify this group
for a given Lie superalgebra $\mf{g}$.  In this paper, a classification
is given in
the case where $\mf{g} = \mf{gl}(m|n)$ and it is shown that
$T(\mf{gl}(m|n)) \cong k \times \Z \times \Z_2$ and is generated by the one parameter
family of one
dimensional modules $k_\lambda$ where $\lambda \in k$, $\Omega^1(k_{ev})$,
which
denotes the first syzygy of
$k_{ev}$, and the parity  change functor.
\end{abstract}

\date{}
\maketitle

\section{Introduction}
Endotrivial modules were first defined by Dade in 1978
for $kG$-modules where $G$ is a finite group and
$k$ is a field of characteristic $p$ where $p$ divides the order of $G$.  A module
$M$ is called endotrivial if there is a $kG$-module isomorphism $\Hom_k(M,M) \cong k \oplus P$ where $k$ is the trivial module and $P$ is a projective module.  Dade's study
of this class of module arose through study of endopermutation modules in 
\cite{Dade1-1978}, and in \cite{Dade2-1978} Dade showed that, in the case when $G$ is an
abelian $p$-group, any endotrivial module
is of the form $\Omega^n(k) \oplus P$ where $\Omega^n(k)$ is the $n$th syzygy of the
trivial module $k$ and $P$ is a projective module.  Syzygies, which are sometimes
called Heller shifts or operators, are discussed in Definition \ref{D:syzygy}.

An interesting aspect of the set of endotrivial modules is that they form a group
in the stable module category where the group operation is the tensor product, $[M] + [N] = [M \otimes N]$.  Puig showed in \cite{Puig-1990} that the group
of endotrivial $kG$-modules, denoted $T(G)$,
is finitely generated for any finite group $G$.  Carlson and Th\'evenaz gave
a complete classification of $T(G)$ when $G$ is an arbitrary $p$-group in
\cite{CT-2004} and \cite{CT-2005}.
Carlson, Mazza, and Nakano have continued the study of $T(G)$ by giving
classifications when $G$ is the symmetric or alternating group
for certain cases in
\cite{CMN-2009} and
Carlson, Hemmer, and Mazza furthered those results
in \cite{CHM-2010}.

The definition of an endotrivial module has been extended beyond $kG$-modules and has been
successfully implemented and studied in a number of other areas of representation theory.
Carlson, Mazza, and Nakano have studied endotrivial modules over finite groups of Lie type
in the defining characteristic in \cite{CMN-2006} and non-defining characteristic
in \cite{CMN-2014}.  Carlson and Nakano also introduced this definition in the study
of modules for finite group schemes in \cite{CN-2009} where they prove that the
endotrivial modules for a unipotent abelian
group scheme are of the form
$\sy{n}{k} \oplus P$.  Although it is not known whether the group of endotrivial modules
over a finite group scheme is finitely generated, Carlson and Nakano proved in a
subsequent paper \cite{CN-2011} that for an arbitrary finite group scheme,
the number of isomorphism
classes of endotrivial modules of a fixed dimension is finite.

The author began the study of endotrivial modules of a Lie superalgebra
$\mf{g} = \supalg{\mf{g}}$ over an algebraically
closed field $k$ of characteristic 0 in \cite{Talian-2013} working
in the category $\F{g}$.  When $\mf{g}$ is classical $\mc{F} = \F{g}$ denotes
the category of finite dimensional $\mf{g}$-modules which are
completely reducible over $\ev{\mf{g}}$.  This is an important category
which has been of significant interest recently and has been studied
in \cite{BKN1-2006}, \cite{B-2002}, \cite{DS-2005},
\cite{S-1998}, and \cite{S-2006}, among others.
The category $\mc{F}$ has enough projectives (\cite{BKN1-2006}),
is self-injective (\cite{BKN3-2009}), meaning that a module
is projective if and only if it is injective, and
for Type I classical Lie superalgebras, e.g. $\mf{gl}(m|n)$, $\mc{F}$
is a highest weight category (\cite{BKN2-2009}).

In this context, a
$\mf{g}$-supermodule $M \in \mc{F}$ is called endotrivial
if there is a supermodule isomorphism
$\Hom_k (M,M) \cong k_{ev} \oplus P$ where $k_{ev}$ denotes the
trivial supermodule concentrated in degree $\overline{0}$ and $P$ is
a projective module in $\mc{F}$ (discussed in Section \ref{SS: Relative Category}).
As has been noted, such modules are an interesting and natural object
to study since they form a group denoted as $T(\mf{g})$ and tensoring with such modules
gives a self equivalence of the stable module category.  Thus, identifying
$T(\mf{g})$ may lead to a better understanding of
$\mc{F}$ or the Picard group of the stable module category
via techniques such as those in \cite{Balmer-2010}.

The author showed in \cite{Talian-2013} that, for a detecting subalgebra of type $\mf{e}$ or $\mf{f}$
(introduced in \cite{BKN1-2006}) whose rank is greater than 1, denoted generically as
$\mf{a}$, there is an isomorphism $T(\mf{a}) \cong \Z \times \Z_2$.
By definition, $\mf{a}$
is isomorphic to a direct sum of copies of either $\mf{q}(1)$ or $\mf{sl}(1|1)$.
The detecting subalgebras are analogous to elementary abelian subgroups
in modular representation theory in the sense that they detect cohomology
and are a natural starting point for the study of endotrivial modules in $\mc{F}$.
In the same paper, it is also shown that under certain restrictions, the
number of endotrivial modules of a fixed dimension $n$ is finite, giving
a result similar to the one mentioned in \cite{CN-2011}.  However, this statement
cannot hold in general by observing that, even for small cases like
$\mf{gl}(1|1)$, there are infinitely many one dimensional modules which are
necessarily endotrivial, forming a subgroup isomorphic to the field $k$.

This paper seeks to build on these results by giving a classification of the group
of endotrivials for the general linear Lie superalgebra, $\mf{gl}(m|n)$.  The main
result, stated in Theorem \ref{T: classification of T(g)}, is that
$$
T(\mf{gl}(m|n)) \cong k \times \Z \times \Z_2.
$$
This is achieved by
defining
an extension of $\F{g}$ for non-classical Lie superalgebras $\mf{g}$ and then
working through an intermediate parabolic subalgebra, denoted as $\mf{p}$.
Endotrivial $\F{p}$ modules are in a sense easier to understand
and in Theorem \ref{T: classification of T(p)} it is shown that there is an isomorphism
$$
T(\mf{p}) \cong k^{r+s} \times \Z \times \Z_2
$$
where $r = \min(m,n)$ and $s= |m - n|$ when $\mf{p} \subseteq \mf{gl}(m|n)$.
Furthermore, by using a geometric induction functor defined in \cite{GS-2010},
Corollary \ref{C: Res is inj} shows there is an
injection
$$
T(\mf{gl}(m|n)) \hookrightarrow T(\mf{p})
$$
and the image is computed
directly, yielding the main theorem.  The classification of $T(\mf{p})$
results from studying the restriction map $T(\mf{p}) \rightarrow T(\mf{f})$ 
given by $M \mapsto M|_{\mf{f}}$ and identifying the kernel.  This is
more approachable than restriction from $\mf{gl}(m|n)$ to $\mf{f}$
because $\mf{p}$ has a smaller and more easily handled set of weights.

\section{Notation and Preliminaries}
\subsection{The Distinguished Parabolic} \label{SS: dist para}

In \cite{BKN1-2006}, the category $\mc{F}_{(\mf{g},\mf{t})}$ is defined for
a classical Lie superalgebra $\mf{g}$ where $\mf{t} \subseteq \mf{g}$ is a subalgebra
of $\mf{g}$.  In this paper, we wish to consider a compatible extension of this notion
beyond the classical case (Section \ref{SS: Relative Category}).
To motivate this, consider
the following.

The classical Lie superalgebra $\mf{gl}(m|n)$ can be defined as
$(m+n) \times (m+n)$ matrices with standard basis vectors
$e_{i,j}$ where $1 \leq i,j \leq m+n$.  The usual grading
is that the even part is defined to be matrices where the only nonzero
entries are in the
$m \times m$ and $n \times n$ block diagonal and the odd part
is defined to be matrices where the only nonzero entries are in the off block
diagonal, i.e.
\[ \ev{(\mf{gl}(m|n))} = \left( \begin{array}{c|c}
		A & 0   \\ \hline
		0 & D  \\
		\end{array} \right)	\quad	
	\od{(\mf{gl}(m|n))} =			
	\left( \begin{array}{c|c}
			0 & B   \\ \hline
			C & 0  \\
			\end{array} \right)
\]
where $A \in M_{m,m}(k)$, $D \in M_{n,n}(k)$, $B \in M_{m,n}(k)$,
and $C \in M_{n,m}(k)$.
It can be verified directly that
$\mf{gl}(m|n) \cong \ev{(\mf{gl}(m|n))} \oplus \od{(\mf{gl}(m|n))}$
as a $\Z_2$ graded algebra via matrix multiplication.
If $Z \in (\mf{gl}(m|n))_i$ is
homogeneous then define $|Z| = i$, and define
a bilinear multiplication in $\mf{gl}(m|n)$  by
the super commutator bracket 
$[X,Y] := XY - (-1)^{|X||Y|} YX$ for homogeneous elements $X,Y
\in \mf{gl}(m|n)$.  The definition is extended to all elements
by linearity and this standard construction gives the matrices
the structure of a Lie superalgebra under the bracket operation.

Let $\mf{p}$ denote the \emph{distinguished parabolic subalgebra}
of $\mf{gl}(m|n)$ defined as follows.
Let $\ev{\mf{p}} \subseteq \ev{(\mf{gl}(m|n))} \cong \mf{gl}(m) \oplus
\mf{gl}(n)$ be generated by the upper triangular matrices of $\mf{gl}(m)$ and
$\mf{gl}(n)$.
Define
$\od{\mf{p}} \subseteq \od{(\mf{gl}(m|n))}$ as the
$m \times n$ and $n \times m$
matrices whose entries are all on or above the odd diagonal.
That is, if $B$ and $C$ are as above, in the standard basis
vectors $\od{\mf{p}}$ is generated by $B' \in M_{m,n}(k)$ where
the only nonzero entries are $e_{i,j}$ where $j \geq i + m$
and $C' \in M_{n,m}(k)$ where $j +m \geq i$.  Then $\mf{gl}(m|n)$
and $\mf{p}$ share a maximal torus $\ev{\mf{t}}$
of the (even) diagonal
matrices.

Note that $\mf{p}$ is not classical however,
and in fact $\ev{\mf{p}}$ is a solvable Lie algebra.  This
requires an extension of the definition of
$\mc{F}_{(\mf{g}, \mf{t})}$ and while the following is written in
a general context, $\mf{p}$ is the primary example to keep
in mind.

\subsection{Relative Projectivity}

Before defining the category $\F{g}$,
the notion of relatively projective modules is considered, as
detailed in \cite[Appendix D]{Kumar-2002}.  If $G$ is a  superalgebra
and $H \subseteq G$ a subsuperalgebra, a sequence of
$G$-supermodules
$$
\cdots \rightarrow M_{i-1} \xrightarrow{f_{i-1}} M_i \xrightarrow{f_i}
	M_{i+1} \rightarrow \cdots
$$
where each $f_i$ is even, i.e. preserves the grading of
the modules,
is called $(G,H)$-exact if it is exact as a sequence as
$G$-supermodules and when the sequence is
considered as $H$-supermodules, $\ker f_i$ is a direct summand
of $M_i|_{H}$ for all $i$.  A $G$-supermodule is called
$(G,H)$-projective if for any $(G,H)$-exact sequence
$$
0 \rightarrow M_1 \xrightarrow{f} M_2 \xrightarrow{g} M_3 \rightarrow 0
$$
and $G$-supermodule map $h: P \rightarrow M_3$ there
is a $G$-supermodule map $\tilde{h} : P \rightarrow M_2$
such that $g \circ \tilde{h} = h$.  Note that any projective $G$-module
is necessarily $(G,H)$-projective.  Relatively injective
modules are defined in a dual way.  

The particular case of interest will be
$(U(\mf{g}),U(\ev{\mf{g}}))$-projective modules.  By
\cite[Lemma D.2]{Kumar-2002}, any $U(\mf{g})$-supermodule $M$ has a
$(U(\mf{g}),U(\ev{\mf{g}}))$-projective module which surjects
onto $M$ given by
$U(\mf{g}) \otimes_{U(\ev{\mf{g}})} M$.  Dually, any such $M$
also has an injective module into which $M$ injects given by
$\Hom_{U(\ev{\mf{g}})}(U(\mf{g}), M)$.

\subsection{The Relative Category} \label{SS: Relative Category}

When $\mf{g}$ is not classical, we
define $\mc{F}_{\rel{g}}$ to be finite dimensional $\U{g}$-modules which are completely
reducible over a fixed maximal
semisimple torus $\ev{\mf{t}} \subseteq \ev{\mf{g}}$, and
the morphisms are all even $\U{g}$-module homomorphisms.
Note, as in
\cite{BKN1-2006}, \cite{BKN3-2009}, and \cite{Talian-2013} the projective
(respectively, injective) objects in
this category will be $\urel{g}$-projective (respectively, $\urel{g}$-injective) modules.
Furthermore, we can also define $\Ext^i_{\rel{g}}(M,N)$ and
$\H^i (\mf{g}, \ev{\mf{g}};  M)$ whose constructions are given in \cite{BKN1-2006}.

Since the category $\mc{F}_{\rel{f}}$ is used extensively in
\cite{Talian-2013}, for the sake of compatibility,
we make the following assumptions on $\mc{F}_{\rel{g}}$ when $\mf{g}$ is
a stable Lie superalgebra (see \cite{BKN1-2006} for the definition).
For stable $\mf{g}$,
there exists a detecting subalgebra $\mf{f} \subseteq \mf{g}$ with
maximal torus $\mf{t_f} \subseteq \mf{f}$.
Let $\ev{\mf{t}} \subseteq \mf{g}$
be a torus for the Lie algebra $\ev{\mf{g}}$ such that
$\mf{t_f} \subseteq \ev{\mf{t}}$.  Then $\mc{F}_{\rel{g}}$ modules are assumed to be
completely reducible over the torus $\ev{\mf{t}}$ such that
$\mf{t_f} \subseteq \ev{\mf{t}}$.

A few preliminary results are given to establish the theory of
endotrivial modules in 
$\mc{F}_{\rel{g}}$, 
which is by convention denoted simply as $\mc{F}$ when there is no ambiguity.
The following proposition
gives a concrete description of the projective and injective modules
in the category, as well as some important properties of $\mc{F}$.

\begin{proposition} \label{P: main}
Let $M$, $P$, and $I$ be modules in $\F{g}$.
\begin{enumerate}[(a)]
\item A module P is $\urel{g}$-projective if and only if it is
a direct summand of $U(\mf{g}) \otimes_{U(\ev{\mf{g}})} N$ for some
$U(\ev{\mf{g}})$-module $N$.
\label{P: rel proj are summands of induced}

\item For the module $M$, there exists a projective
module $P$ and an injective module $I$ such that there are homomorphisms of $\mc{F}$ modules
$\pi: P \twoheadrightarrow M$ and $\iota: M \hookrightarrow I$.
\label{C: proj covers exist}

\item A module $P$ is projective in $\mc{F}$
if and only if it is an injective module in $\mc{F}$.
\label{C: self inj}
\end{enumerate}
\end{proposition}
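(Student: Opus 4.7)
I would prove the three parts in order, each using standard relative homological algebra adapted to the super setting of \cite{BKN1-2006} and \cite{Kumar-2002}. The main content beyond citing those references is verifying that the induced and coinduced modules actually land in $\mc{F}_{\rel{g}}$, and establishing the ``self-injective'' identity (c) without assuming $\mf{g}$ is classical.

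For (a), the plan is to use the adjunction $\Hom_{U(\mf{g})}(\ind{g} N, M) \cong \Hom_{U(\ev{\mf{g}})}(N, M|_{U(\ev{\mf{g}})})$. Given a $\urel{g}$-exact sequence $0 \to M_1 \to M_2 \to M_3 \to 0$ and a map $h \colon \ind{g} N \to M_3$, relative exactness gives a $U(\ev{\mf{g}})$-section of $M_2 \to M_3$, which produces a $U(\ev{\mf{g}})$-lift of the restriction of $h$ to $N$; adjunction then yields the required $U(\mf{g})$-lift, showing $\ind{g} N$ is $\urel{g}$-projective. Conversely, for any $\urel{g}$-projective $P$, the multiplication map $\ind{g} P \twoheadrightarrow P$ is $\urel{g}$-split by the section $p \mapsto 1 \otimes p$, so relative projectivity of $P$ lets us lift the identity on $P$, realizing $P$ as a direct summand of $\ind{g} P$.

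For (b), I would take $P := \ind{g} M$ with the counit $P \twoheadrightarrow M$. The remaining check is that $P \in \F{g}$: by the super PBW theorem, $P \cong \Lambda^{\bullet}(\od{\mf{g}}) \otimes M$ as a vector space, so $P$ is finite dimensional; moreover, the torus $\ev{\mf{t}}$ acts semisimply on $\Lambda^\bullet(\od{\mf{g}})$ via the adjoint action and semisimply on $M$ by hypothesis, so $P$ is $\ev{\mf{t}}$-semisimple. Dually, set $I := \Hom_{U(\ev{\mf{g}})}(U(\mf{g}), M)$ with the unit $m \mapsto (u \mapsto u\cdot m)$; finite dimensionality and $\ev{\mf{t}}$-semisimplicity follow by the same PBW argument.

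The main obstacle is part (c). The strategy is the super analog of the Frobenius extension property: there is an isomorphism of $U(\mf{g})$-supermodules
\[
U(\mf{g}) \otimes_{U(\ev{\mf{g}})} N \;\cong\; \Hom_{U(\ev{\mf{g}})}\bigl(U(\mf{g}), N \otimes L\bigr),
\]
where $L$ is the one-dimensional $U(\ev{\mf{g}})$-module $\Lambda^{\text{top}}(\od{\mf{g}})$. Because tensoring with $L$ is an autoequivalence of $U(\ev{\mf{g}})$-modules, this identifies the classes of $\urel{g}$-projective and $\urel{g}$-injective objects. Combined with (a) and its dual, this yields (c). The delicate point is writing down this identification in the non-classical case (most notably for the distinguished parabolic $\mf{p}$), which is done via the super trace pairing on $\Lambda^{\bullet}(\od{\mf{g}})$; once the identification is in hand, the proposition follows formally.
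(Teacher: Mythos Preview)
Your proposal is correct and follows essentially the same approach as the paper. For (a) and (b) you reproduce the paper's argument (the paper phrases the ``summand implies projective'' direction via $\Ext^1$-vanishing through Frobenius reciprocity rather than a direct lift, but this is the same adjunction); for (c) the paper simply cites \cite[Proposition~2.2.2]{BKN3-2009}, whose content is exactly the Frobenius extension isomorphism $\ind{g} N \cong \Hom_{U(\ev{\mf{g}})}(U(\mf{g}), N \otimes \Lambda^{\mathrm{top}}(\od{\mf{g}}))$ you spell out, so your additional detail here is welcome and consistent with what the paper invokes.
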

\begin{proof}
For (\ref{P: rel proj are summands of induced}),
first assume that $P$ is projective in $\mc{F}$.  The
following sequence is, by construction,
$\urel{g}$-exact
$$
\begin{tikzpicture}[start chain] {
	\node[on chain] {$0$};
	\node[on chain] {$\ker \mu$} ;
	\node[on chain] {$\ind{g} P|_{\ev{\mf{g}}}$};
	\node[on chain, join={node[above, font=\scriptsize] {$\mu$}}] {$P$};
	\node[on chain] {$0$}; }
\end{tikzpicture}
$$
and is split by using the $\urel{g}$-projectivity of $P$ to extend the
identity map on $P$ in the standard way.

Now, let $P$ be a direct summand of $U(\mf{g}) \otimes_{U(\ev{\mf{g}})} N$
for some $U(\ev{\mf{g}})$-module $N$.  Then
$$
\Ext^1_{(\mf{g}, \ev{\mf{g}})}(P, R) \hookrightarrow 
\Ext^1_{(\mf{g}, \ev{\mf{g}})}(U(\mf{g}) \otimes_{U(\ev{\mf{g}})} N, R) =
\Ext^1_{(\ev{\mf{g}}, \ev{\mf{g}})}(N,R) = 0
$$
for any module $R$ in $\mc{F}$.  Thus, $P$ is
$\urel{g}$-projective and so it is projective in $\mc{F}$.

Part (\ref{C: proj covers exist}) follows from
\cite[Lemma D.2]{Kumar-2002} by noting that the extra condition
of complete reducibility holds and the proof given in \cite[Propositions 2.2.2]{BKN3-2009} holds for
$\mc{F}$ which proves (\ref{C: self inj}).
\end{proof}

\begin{definition} \label{D:syzygy}
Let $\mf{g}$ be a
Lie superalgebra and let $M$ be a module in
$\mc{F}_{\rel{g}}$.  Let
$P$ be a minimal projective module in $\mc{F}$ which surjects on to $M$ (called the projective
cover), with the map
$
\psi : P \twoheadrightarrow M.
$
The \define{first syzygy of $M$} is defined to be $\ker \psi$ and is denoted by
$\Omega^1_{\mf{g}}(M)$.  This is also referred to as a
Heller shift (or Heller operator) in some literature.  Inductively, define
$\Omega^{n+1}_{\mf{g}}(M) := \Omega^1_{\mf{g}}(\Omega^n_{\mf{g}}(M))$.

Similarly, given $M$, let $I$ be the injective hull of $M$ with the
inclusion
$
\iota : M \hookrightarrow I,
$
and define $\Omega\inv_{\mf{g}} (M) := \coker \iota$.  This is extended
by defining $\Omega^{-n-1}_{\mf{g}}(M) := \Omega\inv_{\mf{g}} ( \Omega^{-n}_{\mf{g}}(M))$.

Finally, define $\Omega^0_{\mf{g}}(M)$ to be the compliment of the largest $\urel{g}$-projective direct
summand of $M$.  In other words, we can write $M = \Omega^0_{\mf{g}}(M) \oplus Q$
where $Q$ is projective in $\mc{F}$ and maximal with
respect to this property.
Thus, the \define{$n$th syzygy of $M$} is defined for any
integer $n$.
\end{definition}

\begin{convention}
When there is no ambiguity, $\Omega^n_{\mf{g}}(M)$ may
be denoted as $\Omega^n(M)$.
\end{convention}

\subsection{Endotrivial Modules}

With a better understanding of projective modules in $\mc{F}$,
we now define the object of interest in this paper.

\begin{definition}
A module in $\F{g}$ is called endotrivial if
$\End_k(M) \cong k_{ev} \oplus P$ as $U(\mf{g})$-modules for
some projective module $P$ in $\mc{F}$.
\end{definition}

This definition is equivalent to defining $M$ to be endotrivial
if $M \otimes M^* \cong k_{ev} \oplus P$ by using the isomorphism
$\Hom(V,W) \cong W \otimes V^*$.
One of the interests in studying endotrivial modules is that they
form a group in the stable module category.
\begin{definition}
Given a category of modules, $\mc{F}$,
consider the category with the same objects
as the original category and an equivalence relation on the morphisms
given by $ f \sim g$ if $f - g$ factors through a projective module in
$\mc{F}$.
This is called the \define{stable module category} of $\mc{F}$ and is denoted
by $\operatorname{Stmod}(\mc{F})$.
\end{definition}

\begin{definition}
Let $\mf{g}$ be a Lie superalgebra.  The set of endotrivial 
modules in $\op{Stmod}(\mc{F}_{\rel{g}})$
$$
T(\mf{g}) := \left\{ [M] \in \operatorname{Stmod}(\mc{F}) \st M \otimes M^* \cong
k_{ev} \oplus P_M \text{ for some $P_M$ which is projective in $\mc{F}$} \right\}
$$
forms a group in the stable module category of $\mc{F}$
under the operation $[M] + [N] := [M \otimes N]$.  This group is called
the \emph{group of endotrivial $\mf{g}$-supermodules}.
\end{definition}
More details on syzygies and this group are given in \cite{Talian-2013}.  One such
observation is that if $M$ is any endotrivial module, then $\Omega^n(M)$ is
endotrivial as well for any $n \in \N$.  An additional result is stated here
relating syzygies relative to different Lie superalgebras and
will be useful throughout this work.

\begin{lemma} \label{L: restriction commutes}
Let $\mf{g}$ be a Lie superalgebra with torus $\mf{t_g}$.
Let $\mf{h} \subseteq \mf{g}$ be a Lie subalgebra with torus $\mf{t_h}$
such that $\mf{t_h} \subseteq \mf{t_g}$ and that for each projective
module $Q$ in $\mc{F}_{\rel{g}}$,
$Q|_{\mf{h}}$ is projective
in $\mc{F}_{\rel{h}}$.
Let $M$ be a module in $\mc{F}_{\rel{g}}$,
then $\Omega^n_{\mf{g}}(M)|_{\mf{h}} \cong \Omega^n_{\mf{h}}(M|_{\mf{h}})
\oplus P$
for all $n \in \Z$ where $P$ is a projective module in $\mc{F}_{\rel{h}}$.
\end{lemma}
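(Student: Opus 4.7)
The plan is to establish the lemma for $n = 1$ via a Schanuel-type argument combined with Krull--Schmidt cancellation, extend to all positive $n$ by induction, dualize for negative $n$, and handle $n = 0$ separately.

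For the base case $n = 1$, I would let $\pi : P \twoheadrightarrow M$ be the projective cover of $M$ in $\mc{F}_{\rel{g}}$, so that $\Omega^1_{\mf{g}}(M) = \ker \pi$. Since $\mf{t_h} \subseteq \mf{t_g}$, restriction carries $\mc{F}_{\rel{g}}$-modules into $\mc{F}_{\rel{h}}$-modules, and the hypothesis guarantees that $P|_{\mf{h}}$ is projective in $\mc{F}_{\rel{h}}$. Alongside the projective cover $P_0 \twoheadrightarrow M|_{\mf{h}}$ in $\mc{F}_{\rel{h}}$ with kernel $\Omega^1_{\mf{h}}(M|_{\mf{h}})$, this produces two $(U(\mf{h}), U(\ev{\mf{h}}))$-exact sequences sharing the quotient $M|_{\mf{h}}$. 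A relative version of Schanuel's lemma, proved by the standard pullback construction applied in the relative setting, should yield
\[
\Omega^1_{\mf{g}}(M)|_{\mf{h}} \oplus P_0 \cong \Omega^1_{\mf{h}}(M|_{\mf{h}}) \oplus P|_{\mf{h}}.
\]
Finite-dimensionality makes $\mc{F}_{\rel{h}}$ a Krull--Schmidt category, and $\Omega^1_{\mf{h}}(M|_{\mf{h}})$ has no projective summand (being contained in the radical of the projective cover $P_0$). Matching indecomposable multiplicities on the two sides then produces a projective $P'$ in $\mc{F}_{\rel{h}}$ with $\Omega^1_{\mf{g}}(M)|_{\mf{h}} \cong \Omega^1_{\mf{h}}(M|_{\mf{h}}) \oplus P'$.

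For $n \geq 2$, I would apply the base case to $\Omega^{n-1}_{\mf{g}}(M) \in \mc{F}_{\rel{g}}$ and then the induction hypothesis, using that $\Omega^1_{\mf{h}}$ distributes across direct sums and vanishes on projectives (the projective cover of a projective is the identity). For $n \leq -1$, the plan is to dualize: Proposition~\ref{P: main}(\ref{C: self inj}) identifies injectives with projectives in both categories, so the hypothesis also ensures that restriction sends injective hulls to injectives in $\mc{F}_{\rel{h}}$, and the same Schanuel--Krull--Schmidt argument applied to the sequence $0 \to M \to I \to \Omega^{-1}_{\mf{g}}(M) \to 0$ handles $n = -1$ and launches the negative induction. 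For $n = 0$, I would decompose $M = \Omega^0_{\mf{g}}(M) \oplus Q$ with $Q$ a maximal projective summand in $\mc{F}_{\rel{g}}$, restrict, and invoke Krull--Schmidt once more to recognize $\Omega^0_{\mf{h}}(M|_{\mf{h}})$ as a direct summand of $\Omega^0_{\mf{g}}(M)|_{\mf{h}}$.

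The main obstacle I anticipate is verifying Schanuel's lemma in the relative $(U(\mf{h}), U(\ev{\mf{h}}))$-projective setting without assuming that $\ev{\mf{h}}$ is reductive: the intended application (for instance to the distinguished parabolic $\mf{p}$) has solvable even part, so one cannot appeal to complete reducibility over $U(\ev{\mf{h}})$ and the pullback of the two resolutions must be checked directly to inherit $(U(\mf{h}), U(\ev{\mf{h}}))$-exactness from the splittings of the original sequences. Everything after that step is Krull--Schmidt bookkeeping combined with the hypothesis that restriction preserves projectives.
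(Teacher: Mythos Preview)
Your approach is correct and matches the paper's proof, which is considerably terser: the paper simply restricts the defining short exact sequence, observes that $Q|_{\mf{h}}$ is projective (though possibly not minimal), and concludes ``by definition'' that $\Omega^1_{\mf{g}}(M)|_{\mf{h}} \cong \Omega^1_{\mf{h}}(M|_{\mf{h}}) \oplus P$, then inducts. The Schanuel-plus-Krull--Schmidt argument you spell out is precisely what underlies that phrase, and your concern about relative exactness in the pullback is the only nontrivial point the paper suppresses (it goes through because a $U(\ev{\mf{h}})$-section of one surjection composed with the other surjection provides the required splitting).
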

\begin{proof}
Let $M$ be as above and let
$$
\begin{tikzpicture}[start chain] {
	\node[on chain] {$0$};
	\node[on chain] {$\Omega^1_{\mf{g}}(M)$} ;
	\node[on chain] {$Q$};
	\node[on chain] {$M$};
	\node[on chain] {$0$}; }
\end{tikzpicture}
$$
be the short exact sequence of modules in $\F{g}$ defining
$\Omega^1_{\mf{g}}(M)$.  Then
$$
\begin{tikzpicture}[start chain] {
	\node[on chain] {$0$};
	\node[on chain] {$\Omega^1_{\mf{g}}(M)|_{\mf{h}}$} ;
	\node[on chain] {$Q|_{\mf{h}}$};
	\node[on chain] {$M|_{\mf{h}}$};
	\node[on chain] {$0$}; }
\end{tikzpicture}
$$
is an exact sequence
and the module $Q|_{\mf{h}}$ is projective in $\mc{F}_{\rel{h}}$
(although perhaps not minimal).  Then by definition,
$\Omega^1_{\mf{g}}(M)|_{\mf{h}} \cong \Omega^1_{\mf{h}}(M|_{\mf{h}})
\oplus P$.
This argument applies to $\Omega\inv_{\mf{g}}(M)$ as well and so by induction,
$\Omega^n_{\mf{g}}(M)|_{\mf{h}} \cong \Omega^n_{\mf{h}}(M|_{\mf{h}})
\oplus P$
for all $n \in \Z$
\end{proof}

\section{The Distinguished Parabolic} \label{S: glmn}

As noted,
the primary purpose of extending the definition of $\mc{F}_{\rel{g}}$ beyond classical
Lie superalgebras, is to consider the case for $\mf{p}$, 
the distinguished parabolic
subalgebra of $\mf{gl}(m|n)$ defined in Section
\ref{SS: dist para}.
There is a relationship between the groups
$T(\mf{gl}(m|n))$ and $T(\mf{p})$ and understanding $T(\mf{p})$ will eventually
lead to a classification of $T(\mf{gl}(m|n))$, the main goal of this paper.

Note that if $\mf{f}$ is defined to be the subalgebra of $\mf{p}$ generated
by elements which are strictly on the odd diagonal (plus a
torus of dimension $|m-n|$ which is described in the proof of
Theorem \ref{T: classification of T(p)}), then $\mf{f} \subseteq \mf{p} \subseteq \mf{gl}(m|n)$
and $\mf{t_f} \subseteq \mf{t_p} = \ev{\mf{t}}$.
Given this set up, we will relate $T(\mf{f})$, $T(\mf{p})$, and
$T(\mf{gl}(m|n))$.

The main reasons to study $\mf{p}$ are that the set of weights relative to the torus
for $\mf{f}$ are well behaved and that it has a $\Z$ grading which is consistent with
the $\Z_2$ grading.
This $\Z$ grading allows for results analogous to those in \cite{BKN3-2009} and
\cite{LNZ-2011} to be
extended to $\mf{p}$.

In the following two sections, we exploit this $\Z$ grading to derive results about projectivity when restricting
from $\mf{gl}(m|n)$ to $\mf{p}$ to $\mf{f}$.
First, we establish that restriction to each of these
subalgebras takes projectives to projectives in order to have well defined
maps between the groups which are defined in the stable module category.  Second, it is
shown that if a module in $\F{p}$ is projective when restricted to $\F{f}$, then it
is projective in $\F{p}$ as well.

\subsection{Restriction and Projectivity}

Because $\mf{g} = \mf{gl}(m|n)$ is a Type I Lie superalgebra,
$\mf{g}$ has a $\Z$ grading of the form
$\mf{g} = \mf{g}_{-1} \oplus \mf{g}_0 \oplus \mf{g}_1$ which is consistent
with the standard $\Z_2$ grading.  This gives a consistent $\Z$ grading on
$\mf{p} \subseteq \mf{g}$
by defining $\mf{p}_i = \mf{p} \cap \mf{g}_i$ for $i \in \Z$, and so
$\mf{p} = \mf{p}_{-1} \oplus \mf{p}_0 \oplus \mf{p}_1$.  Given this grading,
define $\mf{p}^+ := \mf{p}_0 \oplus \mf{p}_1$ and
$\mf{p}^- := \mf{p}_{-1} \oplus \mf{p}_0$.  Similarly, we may
decompose $\mf{f} = \mf{f}_{-1} \oplus \mf{f}_0 \oplus \mf{f}_1$ and
define $\mf{f}^+ :=
\mf{f}_0 \oplus \mf{f}_1$ and $\mf{f}^- := \mf{f}_{-1} \oplus \mf{f}_0$.

Following the work in \cite{BKN3-2009}, define $\mc{F}(\mf{p}_{\pm 1})$
to be the category of finite dimensional $\mf{p}_{\pm 1}$-modules.
For the objects in $\mc{F}(\mf{p}_{\pm 1})$, define
the support variety
$\mc{V}_{\mf{p}_{\pm 1}}(M)$ as in \cite{BKN3-2009} and the rank variety
$$
\mc{V}^{\text{rank}}_{\mf{p}_{\pm 1}}(M) = 
\{x \in \mf{p}_{\pm 1} \st M \text{ is not projective as
a $U(\gen{x})$-module}  \} \cup \{0 \} .
$$
Since $\mf{p}_1$ and $\mf{p}_{-1}$ are both abelian Lie superalgebras,
both are well defined and identified by the canonical isomorphism
detailed in \cite{BKN1-2006}.

Consider $X(\ev{\mf{t}}) \subseteq \ev{\mf{t}}^*$,
the set of weights relative to
a fixed maximal torus $\ev{\mf{t}} \subseteq \ev{\mf{p}}$.
It will be very useful to have a partial ordering on these weights.
Let $d = \dim \ev{\mf{t}}$.
If we fix the dual basis of $\ev{\mf{t}}$ to be the basis for 
$X(\ev{\mf{t}})$, the weights can be parameterized by the set
$k^{d}$ so
any $\lambda \in X(\ev{\mf{t}})$ can be though of as an ordered
$d$-tuple
$(\lambda_1, \dots, \lambda_{d})$.  For two weights
$\lambda = (\lambda_1, \dots, \lambda_{d})$ and
$\mu = (\mu_1, \dots, \mu_{d})$, we say that
$\lambda \geq \mu$ if and only if for each $k = 1, \dots, d$,
\begin{equation} \label{E:Dominance ordering}
\sum\limits_{i=1}^{k} \lambda_i \geq \sum\limits_{i=1}^{k} \mu_i
\end{equation}
and equality holds if and only if $\lambda = \mu$.
This ordering is referred to as the \emph{dominance ordering}
and will
allow the use of highest weight theory.

A module $M \in\mc{F}_{(\mf{p}, \ev{\mf{p}})}$ is called a \emph{highest weight module} if
in the weight decomposition
$M \cong \bigoplus_{\lambda \in X(\ev{\mf{t}})} M_{\lambda}$, there
exists a weight $\lambda_0$ such that $\lambda_0 \geq \mu$ for each
nonzero weight space $M_{\mu}$ of $M$.  

\begin{proposition}
If $S \in \mc{F}_{(\mf{p}, \ev{\mf{p}})}$  is simple, then $S$ is a highest
weight module in $\mc{F}_{(\mf{p}, \ev{\mf{p}})}$.
\end{proposition}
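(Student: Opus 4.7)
The plan is to adapt the standard highest weight argument to $\mf{p}$ by using its $\Z$-grading $\mf{p}=\mf{p}_{-1}\oplus\mf{p}_0\oplus\mf{p}_1$ (introduced just above) as a triangular decomposition. I would pick any maximal weight $\lambda_0$ of $S$ in the dominance order, show that a weight vector $v\in S_{\lambda_0}$ is annihilated by the ``positive'' part $\mf{n}_0\subseteq\mf{p}_0$ together with $\mf{p}_1$, and then generate $S$ from $v$ using only the ``negative'' part $\mf{p}_{-1}$, which forces every other weight of $S$ to lie below $\lambda_0$.

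The first step is to identify the $\ev{\mf{t}}$-roots occurring in each graded piece of $\mf{p}$ via the explicit matrix description in Section~\ref{SS: dist para}, and to verify their positions in the dominance order~\eqref{E:Dominance ordering}. A direct partial-sum check shows that every nonzero root of $\mf{p}_1$ and every root of the strictly upper triangular part $\mf{n}_0\subseteq\ev{\mf{p}}=\mf{p}_0$ is strictly positive, while every nonzero root of $\mf{p}_{-1}$ is strictly negative. I would also record that the dominance order is additive, so a sum of weights $\le 0$ is again $\le 0$.

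Next, since $S$ is finite dimensional and completely reducible over $\ev{\mf{t}}$, its weight set is finite and therefore contains a maximal element $\lambda_0$ in the dominance partial order. Fix a nonzero $v\in S_{\lambda_0}$. For each root vector $x$ with strictly positive root $\alpha$, we have $x\cdot v\in S_{\lambda_0+\alpha}=0$ by maximality, so $\mf{p}_1\cdot v=0$ and $\mf{n}_0\cdot v=0$, while $\ev{\mf{t}}$ acts on $v$ through the character $\lambda_0$.

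Finally, the super PBW theorem applied to $\mf{p}=\mf{p}_{-1}\oplus\mf{p}_0\oplus\mf{p}_1$ yields $U(\mf{p})=U(\mf{p}_{-1})\,U(\mf{p}_0)\,U(\mf{p}_1)$ as vector spaces, and the previous step reduces $U(\mf{p})\cdot v$ to $U(\mf{p}_{-1})\cdot v$. Simplicity of $S$ then forces $S=U(\mf{p}_{-1})\cdot v$, so every weight of $S$ has the form $\lambda_0+\sum_i\beta_i$ with each $\beta_i$ a (negative) root of $\mf{p}_{-1}$, and is therefore $\le\lambda_0$ by additivity of the dominance order. The main obstacle is really just the sign computation in the first step --- once the roots of $\mf{p}_{\pm 1}$ and $\mf{n}_0$ are correctly placed in the dominance order, the rest is a routine highest weight argument adapted to the super setting (and, as a byproduct, shows that the maximal weight is automatically a unique maximum).
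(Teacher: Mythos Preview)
Your proposal is correct and follows essentially the same route as the paper: pick a dominance-maximal weight $\lambda_0$, show the raising operators kill a vector $v\in S_{\lambda_0}$, and use simplicity together with PBW to get $S=U(\mf{p}_{-1})\cdot v$, forcing all weights below $\lambda_0$. If anything, you are more careful than the paper, which only explicitly argues that $\mf{p}_1$ kills $v$ and then asserts that ``any element of $\mf{p}^+$ either stabilizes or kills $v$'' without separately treating the strictly upper triangular part $\mf{n}_0\subseteq\mf{p}_0$; your explicit sign check for the roots of $\mf{n}_0$ fills that small gap.
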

\begin{proof}
Because $S$ is finite dimensional, there exists a weight $\lambda_0 \in X(\ev{\mf{t}})$ such
that $\mu \ngtr \lambda_0$ for all nonzero weight spaces $S_{\mu}$ of $S$.
Note that this means all weights are either less than or equal to or not
comparable to $\lambda_0$.

For any element $p$ of $\mf{p}_1 $,
$p.S_{\lambda} \subseteq S_{\mu}$ implies that $\mu > \lambda$ in
$X(\ev{\mf{t}})$.  This
yields that $p.S_{\lambda_0} = 0$ for any $p \in \mf{p}_1$.  Since $S$ is
simple,
for $v \in S_{\lambda_0}$, $v$ generates $S$ and $p.v = 0$.

Thus, $S = U(\mf{p}_{-1})U(\mf{p}^+).v$ but since any element
of $\mf{p}^+$ either
stabilizes or kills $v$, it follows that $S = U(\mf{p}_{-1}).v$.  It is
now clear, because $v \in S_{\lambda_0}$ that any element of $S_{\mu} \neq 0$
is equal to $cy.v$ for some $y \in U(\mf{p}_{-1})$ and $c \in k$, where
$\lambda_0 \geq \mu$.  Thus $S$ is a highest weight module.
\end{proof}

Since $\ev{\mf{p}}$ is a solvable Lie algebra, the only simple
modules are one dimensional modules $k_\lambda$ where the torus
acts by weight $\lambda \in X(\ev{\mf{t}})$.
Because $\mf{p}_1 \subseteq \mf{p}^+$ and $\mf{p}_{-1} \subseteq \mf{p}^-$
are ideals, the module $k_\lambda$ can be considered as a simple
$\mf{p}^\pm$-module by
inflation via the canonical quotient map
$\mf{p}^{\pm} \twoheadrightarrow \ev{\mf{p}}$.  By construction,
$\mf{p}_{1}$ and $\mf{p}_{-1}$ act by 0 on $k_\lambda$.  Define
$$
K(\lambda) = U(\mf{p}) \otimes_{U(\mf{p}^+)} k_\lambda
\quad \text{and} \quad
K^-(\lambda) = \Hom_{U(\mf{p}^-)} (U(\mf{p}), k_\lambda)
$$
to be the Kac module and the dual Kac module, respectively.

The Kac module $K(\lambda)$ has several useful properties.
First, by construction it is a highest weight module in
$\mc{F}_{(\mf{p}, \ev{\mf{p}})}$.
Since $K(\lambda)$ is generated by a highest weight vector,
it has a simple head.
Also, if $S$ is any simple module in
$\mc{F}_{(\mf{p}, \ev{\mf{p}})}$ where $S$ has
highest weight $\lambda$ for some weight $\lambda$ of $S$,
and $v \in S_\lambda$, 
there is a surjective homomorphism $K(\lambda) \twoheadrightarrow S$
given by $u \otimes 1 \mapsto u.v$.

Furthermore,
$K(\lambda)/ \op{Rad}(K(\lambda)) \cong S$ and is denoted $L(\lambda)$.
Note that this surjective homomorphism is in fact
valid for any highest weight module and
in this sense, the Kac module is universal.

Dually, simple modules in $\F{p}$ are lowest weight modules and
if $L(\lambda)$ has lowest weight $\mu$,
then $K^-(\mu)$ has a simple socle which is isomorphic to
$L(\lambda)$ as well and $\mu$ is the lowest weight of $K^-(\mu)$.

Now we define two useful filtrations of a module $M$ in $\mc{F}_{(\mf{p}, \ev{\mf{p}})}$.
$M$ is said to admit a Kac filtration if there is a filtration
$$
\{0 \} = M_0 \subsetneq M_1 \subsetneq \dots \subsetneq M_t = M
$$
of the module $M$ such that for $i = 1, \dots t$,
$M_i / M_{i-1} \cong K(\lambda_i)$
for some $\lambda_i \in X(\ev{\mf{t}})$.  Similarly,
if $M$ has a filtration as above such that for $i = 1, \dots t$,
$M_i / M_{i-1} \cong K^-(\lambda_i)$, then $M$ is said to admit a dual
Kac filtration.

By the same reasoning in \cite{BKN3-2009}, modules in 
$\mc{F}_{(\mf{p}, \ev{\mf{p}})}$ satisfy
the following.

\begin{theorem} \label{T: Kac filtration conditions}
Let $M$ be a module in
$\mc{F}_{(\mf{p}, \ev{\mf{p}})}$.
Then the following are equivalent.
\begin{enumerate}
\item $M$ has a Kac filtration;

\item $\Ext^1_{\F{p}} (M, K^-(\mu)) = 0$ for all
$\mu \in X(\ev{\mf{t}})$;

\item $\Ext^1_{\mc{F}(\mf{p}_{-1})} (M, k) =0$;

\item $\mc{V}_{\mf{p}_{-1}} (M) = 0$.
\end{enumerate}
\end{theorem}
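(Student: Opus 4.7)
The plan is to establish the cycle of implications $(1)\Rightarrow(2)\Rightarrow(3)\Rightarrow(4)\Rightarrow(1)$, closely paralleling the argument in \cite{BKN3-2009}, but modified to accommodate the non-classicality of $\mf{p}$ and the solvability of $\ev{\mf{p}}$. The compatible $\Z$-grading $\mf{p} = \mf{p}_{-1}\oplus\mf{p}_0\oplus\mf{p}_1$ and the corresponding PBW decomposition $U(\mf{p}) \cong U(\mf{p}_{-1})\otimes U(\mf{p}^+)$ allow most of the BKN arguments to carry over essentially unchanged.

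For $(1)\Rightarrow(2)$ I would induct on the length of the Kac filtration; the long exact sequence for $\Ext$ at each step reduces the claim to the base case $\Ext^1_{\F{p}}(K(\lambda), K^-(\mu)) = 0$. Since $K(\lambda)$ is induced from $U(\mf{p}^+)$, Frobenius reciprocity converts this to an $\Ext^1$ over $(\mf{p}^+, \ev{\mf{p}})$ with first argument $k_\lambda$, which vanishes after an explicit analysis of the $\mf{p}^+$-structure of $K^-(\mu)$. For $(2)\Rightarrow(3)$, use that $K^-(\mu) = \Hom_{U(\mf{p}^-)}(U(\mf{p}), k_\mu)$ is coinduced, so Frobenius reciprocity gives $\Ext^1_{\F{p}}(M, K^-(\mu)) \cong \Ext^1_{(\mf{p}^-, \ev{\mf{p}})}(M, k_\mu)$. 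Since $\mf{p}^- = \mf{p}_{-1} \rtimes \ev{\mf{p}}$, a Hochschild--Serre / relative Chevalley--Eilenberg argument reduces this further to a weight component of $\Ext^1_{\mf{p}_{-1}}(M, k)$, and letting $\mu$ range over all $\ev{\mf{t}}$-weights forces the total $\Ext^1_{\mf{p}_{-1}}(M, k)$ to vanish.

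The equivalence $(3)\Leftrightarrow(4)$ rests on the fact that $\mf{p}_{-1}$ is abelian and purely odd, so $U(\mf{p}_{-1})$ is a finite dimensional exterior algebra, hence local and self-injective. Over such an algebra a finite dimensional module is projective (equivalently free) if and only if $\Ext^1(-,k) = 0$, and projectivity is in turn equivalent to triviality of the rank variety via the canonical isomorphism between rank and support varieties in the abelian case recalled in \cite{BKN1-2006}.

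The main obstacle is $(4)\Rightarrow(1)$. The plan is to induct on $\dim M$: assuming $\mc{V}_{\mf{p}_{-1}}(M) = \{0\}$, extract a Kac module $K(\lambda)$ (either at the socle or as a top quotient, using the highest-weight theory already established for $\F{p}$) so that the complementary piece in the resulting short exact sequence again has trivial $\mf{p}_{-1}$-support. The delicate step is verifying that support-variety vanishing propagates through this short exact sequence; this relies on the two-out-of-three behaviour of $\mc{V}_{\mf{p}_{-1}}$ on short exact sequences together with the fact that each $K(\lambda)|_{\mf{p}_{-1}} \cong U(\mf{p}_{-1}) \otimes k$ is free via the PBW decomposition, so that $K(\lambda)$ itself has trivial $\mf{p}_{-1}$-support. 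Granting this, iteration completes the Kac filtration.
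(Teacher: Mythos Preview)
Your proposal is correct and is exactly the approach the paper takes: the paper gives no proof at all, stating only that ``by the same reasoning in \cite{BKN3-2009}'' the result holds, and your sketch is precisely a fleshed-out version of that reasoning adapted to $\mf{p}$. In particular your observation that the PBW decomposition $U(\mf{p}) \cong U(\mf{p}_{-1}) \otimes U(\mf{p}^+)$ and the $\Z$-grading survive intact is the key reason the BKN argument transfers, and this is just what the paper relies on implicitly.
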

\begin{theorem} \label{T: dual Kac filtration conditions}
Let $M$ be a module in $\mc{F}_{(\mf{p}, \ev{\mf{p}})}$.  Then the following are equivalent.
\begin{enumerate}
\item $M$ has a dual Kac filtration;

\item $\Ext^1_{\F{p}} (K(\mu), M) = 0$ for all $\mu \in X(\ev{\mf{t}})$;

\item $\Ext^1_{\mc{F}(\mf{p}_{1})} (k, M) =0$;

\item $\mc{V}_{\mf{p}_{1}} (M) = 0$.
\end{enumerate}
\end{theorem}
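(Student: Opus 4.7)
The strategy is to establish the cycle $(1) \Rightarrow (2) \Rightarrow (3) \Leftrightarrow (4) \Rightarrow (1)$, mirroring Theorem \ref{T: Kac filtration conditions} with the roles of $\mf{p}_{-1}$ and $\mf{p}_1$ exchanged. For $(1) \Rightarrow (2)$, I induct on the length of the dual Kac filtration; the Ext long exact sequence reduces the claim to the single vanishing $\Ext^1_{\F{p}}(K(\mu), K^-(\lambda)) = 0$. Since $K^-(\lambda)$ is coinduced from $\mf{p}^-$, Frobenius reciprocity (valid in the relative theory because $U(\mf{p})$ is free over $U(\mf{p}^-)$ by PBW) gives
$$\Ext^1_{\F{p}}(K(\mu), K^-(\lambda)) \cong \Ext^1_{(\mf{p}^-, \ev{\mf{p}})}(K(\mu)|_{\mf{p}^-}, k_\lambda).$$
By PBW, $K(\mu)|_{\mf{p}^-} \cong U(\mf{p}_{-1}) \otimes k_\mu \cong U(\mf{p}^-) \otimes_{U(\ev{\mf{p}})} k_\mu$, which is $(\mf{p}^-, \ev{\mf{p}})$-projective by Proposition \ref{P: main}(a), so the Ext vanishes.

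For $(2) \Rightarrow (3)$, Frobenius adjunction applied to $K(\mu) = U(\mf{p}) \otimes_{U(\mf{p}^+)} k_\mu$ yields $\Ext^1_{\F{p}}(K(\mu), M) \cong \Ext^1_{(\mf{p}^+, \ev{\mf{p}})}(k_\mu, M|_{\mf{p}^+})$; since $\mf{p}^+ = \ev{\mf{p}} \oplus \mf{p}_1$ with $\mf{p}_1$ an abelian ideal, a Hochschild--Serre style identification shows this is the $\mu$-weight space of $H^1(\mf{p}_1, M)$ under the $\ev{\mf{t}}$-action. Varying $\mu$ over $X(\ev{\mf{t}})$ exhausts the weight decomposition, so the hypothesis forces $H^1(\mf{p}_1, M) = \Ext^1_{\mc{F}(\mf{p}_1)}(k, M) = 0$. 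For $(3) \Leftrightarrow (4)$, observe that $\mf{p}_1$ is abelian and purely odd, so $U(\mf{p}_1) \cong \Lambda(\mf{p}_1)$ is a finite-dimensional local self-injective algebra with unique simple $k$; hence $M$ is $U(\mf{p}_1)$-projective iff $\Ext^1_{\mc{F}(\mf{p}_1)}(k, M) = 0$, and the identification of support variety with rank variety for abelian Lie superalgebras from \cite{BKN1-2006} gives $\mc{V}_{\mf{p}_1}(M) = 0$ iff $M$ is $U(\mf{p}_1)$-projective.

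The main obstacle is $(4) \Rightarrow (1)$. The plan is to induct on $\dim M$: exhibit a submodule of $M$ isomorphic to $K^-(\lambda_0)$ for a suitable lowest weight $\lambda_0$ and form the short exact sequence $0 \to K^-(\lambda_0) \to M \to M' \to 0$. The submodule is produced by picking a simple constituent $L(\mu_0)$ of $\Soc(M)$, letting $\lambda_0$ be its lowest weight, and extending $L(\mu_0) \hookrightarrow M$ through the socle relation $L(\mu_0) = \Soc(K^-(\lambda_0))$. One then verifies $\mc{V}_{\mf{p}_1}(M') = \{0\}$, using that $K^-(\lambda_0)$ is $U(\mf{p}_1)$-projective by its very construction as coinduction from $\mf{p}^-$ together with the short-exact-sequence properties of support varieties, so that the induction closes. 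An alternative route is to dualize and apply Theorem \ref{T: Kac filtration conditions} to $M^*$, using $K(\mu)^* \cong K^-(-\mu)$ and the compatibility of support varieties with duality. In either case, adapting the weight combinatorics from the classical case of \cite{BKN3-2009} to the present setting where $\ev{\mf{p}}$ is merely solvable rather than reductive is the principal technical difficulty.
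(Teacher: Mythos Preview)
The paper itself does not give a proof of this theorem; both Theorem~\ref{T: Kac filtration conditions} and Theorem~\ref{T: dual Kac filtration conditions} are stated under the blanket sentence ``By the same reasoning in \cite{BKN3-2009}\ldots'' and no argument is supplied. Your cycle $(1)\Rightarrow(2)\Rightarrow(3)\Leftrightarrow(4)$ is exactly the expected strategy, and those three implications are handled correctly via Frobenius reciprocity and the identification of $\Ext^1_{(\mf{p}^+,\ev{\mf{p}})}(k_\mu,M)$ with the $\mu$-weight space of $H^1(\mf{p}_1,M)$.

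Your treatment of $(4)\Rightarrow(1)$, however, has a genuine error in the ``dualize'' alternative. The claim $K(\mu)^*\cong K^-(-\mu)$ is false for $\mf{p}$. By PBW one has $\dim K(\mu)=2^{\dim\mf{p}_{-1}}$ while $\dim K^-(\lambda)=2^{\dim\mf{p}_1}$, and the paper explicitly notes $\dim\mf{p}_1\neq\dim\mf{p}_{-1}$ when $m\neq n$, so the two modules are not even equidimensional. More structurally, the $k$-linear dual sends $U(\mf{p})\otimes_{U(\mf{p}^+)}(-)$ to $\Hom_{U(\mf{p}^+)}(U(\mf{p}),-)$, which is coinduction from $\mf{p}^+$, not from $\mf{p}^-$; and it satisfies $\mc{V}_{\mf{p}_1}(M^*)=\mc{V}_{\mf{p}_1}(M)$ rather than exchanging $\mc{V}_{\mf{p}_1}$ with $\mc{V}_{\mf{p}_{-1}}$. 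There is no anti-involution of $\mf{p}$ interchanging $\mf{p}_1$ and $\mf{p}_{-1}$ when $m\neq n$ (it would have to be a linear isomorphism between spaces of different dimension), so the contravariant-duality shortcut available in the classical setting does not transfer here.

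Your direct route also has a gap: you assert that $L(\mu_0)\hookrightarrow M$ extends to $K^-(\lambda_0)\hookrightarrow M$ ``through the socle relation'', but $K^-(\lambda_0)$ is not injective in $\F{p}$, so there is no automatic lifting along $L(\mu_0)\hookrightarrow K^-(\lambda_0)$. The argument that actually carries over from \cite{BKN3-2009} runs in the opposite direction: one produces a \emph{surjection} $M\twoheadrightarrow K^-(\mu_0)$ using the adjunction $\Hom_{\mf{p}}(M,K^-(\mu_0))\cong\Hom_{\mf{p}^-}(M,k_{\mu_0})$, with $\mu_0$ chosen so that surjectivity follows from $M$ being $U(\mf{p}_1)$-free together with Nakayama, and then checks that the kernel is again $U(\mf{p}_1)$-free so the induction on $\dim M$ closes.
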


These two theorems can be used to show the following powerful condition
relating projectivity in $\mc{F}_{\rel{p}}$ and the support varieties
of $\mf{p}_{\pm 1}$.

\begin{theorem} \label{T: p proj equiv to tilting}
Let $M$ be in $\mc{F}_{(\mf{p}, \ev{\mf{p}})}$.  Then $M$
is projective in $\mc{F}_{(\mf{p}, \ev{\mf{p}})}$ if and only
if $\mc{V}_{\mf{p}_1}(M) = \mc{V}_{\mf{p}_{-1}}(M) = \{ 0 \}$.
\end{theorem}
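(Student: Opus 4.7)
The plan is to prove the two implications separately, with the forward direction being essentially formal and the reverse direction the substantive step.

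For the forward direction, suppose $M$ is projective in $\mc{F}_{\rel{p}}$. By Proposition \ref{P: main}(\ref{P: rel proj are summands of induced}), $M$ is a direct summand of $U(\mf{p}) \otimes_{U(\ev{\mf{p}})} N$ for some $U(\ev{\mf{p}})$-module $N$. Applying the PBW theorem to the triangular decomposition $\mf{p} = \mf{p}_{-1} \oplus \ev{\mf{p}} \oplus \mf{p}_1$ gives a right $U(\ev{\mf{p}})$-module isomorphism $U(\mf{p}) \cong U(\mf{p}_{-1}) \otimes_k U(\mf{p}_1) \otimes_k U(\ev{\mf{p}})$, so $U(\mf{p}) \otimes_{U(\ev{\mf{p}})} N$ is free as a $U(\mf{p}_{\pm 1})$-module. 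Hence $M$, being a summand, is projective over $U(\mf{p}_{\pm 1})$, and therefore $\mc{V}_{\mf{p}_{\pm 1}}(M) = \{0\}$.

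For the reverse direction, assume $\mc{V}_{\mf{p}_1}(M) = \mc{V}_{\mf{p}_{-1}}(M) = \{0\}$. Theorems \ref{T: Kac filtration conditions} and \ref{T: dual Kac filtration conditions} immediately give that $M$ admits both a Kac filtration and a dual Kac filtration. To conclude, I would embed $M$ into its injective hull $I$ in $\mc{F}_{\rel{p}}$ and split this embedding. By self-injectivity (Proposition \ref{P: main}(\ref{C: self inj})), $I$ is projective, so the forward direction applied to $I$ yields $\mc{V}_{\mf{p}_{\pm 1}}(I) = \{0\}$. The long exact sequences in $\H^\ast(\mf{p}_{\pm 1}, -)$ attached to $0 \to M \to I \to I/M \to 0$ then force $\mc{V}_{\mf{p}_{\pm 1}}(I/M) = \{0\}$, so by the two theorems again $I/M$ admits a dual Kac filtration. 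Establishing $\Ext^1_{\F{p}}(M, I/M) = 0$ will split the sequence and exhibit $M$ as a direct summand of the projective $I$.

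The main obstacle is therefore this final Ext-vanishing step: proving $\Ext^i_{\F{p}}(M', N') = 0$ for $i \geq 1$ whenever $M'$ has a Kac filtration and $N'$ has a dual Kac filtration. Following the template of \cite{BKN3-2009}, I would reduce by induction on the lengths of the two filtrations to the case $\Ext^i_{\F{p}}(K(\lambda), K^-(\mu)) = 0$, then compute the latter via relative Frobenius reciprocity, identifying it with the relative cohomology of $\mf{p}^+$ with appropriate coefficients and analyzing it through the PBW structure on $K^-(\mu)|_{\mf{p}^+}$. The delicate point, absent in the classical setting of \cite{BKN3-2009}, is that $\ev{\mf{p}}$ is solvable rather than semisimple, so arguments relying on complete reducibility of $\ev{\mf{p}}$-modules must be replaced by bookkeeping with the dominance order on $X(\ev{\mf{t}})$ introduced in Section \ref{SS: dist para}.
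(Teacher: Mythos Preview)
Your proposal is correct and matches the paper's approach: the paper does not write out a proof but simply states that Theorems \ref{T: Kac filtration conditions}, \ref{T: dual Kac filtration conditions}, and \ref{T: p proj equiv to tilting} all hold ``by the same reasoning in \cite{BKN3-2009}'', and your outline is precisely that reasoning adapted to $\mf{p}$. One small simplification: the Ext-vanishing you flag as the main obstacle, $\Ext^1_{\F{p}}(M, I/M) = 0$ for $M$ Kac-filtered and $I/M$ dual-Kac-filtered, is already available from the equivalence (1)$\Leftrightarrow$(2) in Theorem \ref{T: dual Kac filtration conditions} together with a short d\'evissage on the Kac filtration of $M$, so no separate computation of $\Ext^i_{\F{p}}(K(\lambda),K^-(\mu))$ is needed at this stage.
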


\begin{corollary} \label{C: T(g) to T(p)}
A projective module in
$\mc{F}_{(\mf{g}, \ev{\mf{g}})}$ is also projective in
$\mc{F}_{(\mf{p}, \ev{\mf{p}})}$ and thus, there is a
well defined map
$$\op{res}^{T(\mf{g})}_{T(\mf{p})}: T(\mf{g}) \rightarrow T(\mf{p})$$ given by $M \mapsto M|_{\mf{p}}$.  Moreover, this map is a homomorphism of groups.
\end{corollary}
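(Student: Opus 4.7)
The plan is to reduce everything to the support-variety characterization of projectivity. The main content of the corollary is that restriction from $\mf{g}$ to $\mf{p}$ preserves projectivity; once that is established, well-definedness on the stable category and the homomorphism property follow formally from compatibility of restriction with tensor products and duals.

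First I would prove the key claim: if $M$ is projective in $\mc{F}_{(\mf{g},\ev{\mf{g}})}$, then $M|_{\mf{p}}$ is projective in $\mc{F}_{(\mf{p},\ev{\mf{p}})}$. The approach uses Theorem \ref{T: p proj equiv to tilting}, which reduces the claim to showing $\mc{V}_{\mf{p}_{1}}(M|_{\mf{p}}) = \mc{V}_{\mf{p}_{-1}}(M|_{\mf{p}}) = \{0\}$. The analog of that theorem for $\mf{g} = \mf{gl}(m|n)$, as established in \cite{BKN3-2009}, gives $\mc{V}_{\mf{g}_{1}}(M) = \mc{V}_{\mf{g}_{-1}}(M) = \{0\}$ because $M$ is projective in $\mc{F}_{(\mf{g},\ev{\mf{g}})}$. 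Now $\mf{p}_{\pm 1} \subseteq \mf{g}_{\pm 1}$ is an inclusion of abelian Lie superalgebras, so from the rank variety description
$$
\mc{V}^{\text{rank}}_{\mf{p}_{\pm 1}}(M) = \mc{V}^{\text{rank}}_{\mf{g}_{\pm 1}}(M) \cap \mf{p}_{\pm 1} = \{0\},
$$
and the canonical identification of rank and support varieties gives $\mc{V}_{\mf{p}_{\pm 1}}(M|_{\mf{p}}) = \{0\}$. Applying Theorem \ref{T: p proj equiv to tilting} concludes Step~1.

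Next I would verify that $\op{res}$ descends to a map $T(\mf{g}) \to T(\mf{p})$. If $M$ is endotrivial in $\mc{F}_{(\mf{g},\ev{\mf{g}})}$, say $M \otimes M^* \cong k_{ev} \oplus P$ with $P$ projective in $\mc{F}_{(\mf{g},\ev{\mf{g}})}$, then restricting (and using that $\otimes$ and $*$ for $\mf{g}$-supermodules restrict to the corresponding operations for $\mf{p}$-supermodules, since $\mf{p}$ is a sub-super-Hopf algebra) yields
$$
M|_{\mf{p}} \otimes (M|_{\mf{p}})^* \cong k_{ev} \oplus P|_{\mf{p}},
$$
and by Step~1 the summand $P|_{\mf{p}}$ is projective in $\mc{F}_{(\mf{p},\ev{\mf{p}})}$. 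Thus $M|_{\mf{p}}$ is endotrivial. To check that the stable isomorphism class $[M]$ determines $[M|_{\mf{p}}]$, note that any morphism factoring through a projective in $\mc{F}_{(\mf{g},\ev{\mf{g}})}$ restricts, again by Step~1, to a morphism factoring through a projective in $\mc{F}_{(\mf{p},\ev{\mf{p}})}$; so the map is well-defined on $\op{Stmod}$.

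Finally, the homomorphism property is automatic: $(M \otimes N)|_{\mf{p}} \cong M|_{\mf{p}} \otimes N|_{\mf{p}}$ as $U(\mf{p})$-supermodules, so $\op{res}([M]+[N]) = \op{res}[M\otimes N] = [M|_{\mf{p}} \otimes N|_{\mf{p}}] = \op{res}[M] + \op{res}[N]$. The only nontrivial step is Step~1, and the main obstacle there is having the right support-variety characterization of projectivity available simultaneously for $\mf{g}$ and $\mf{p}$; for $\mf{g} = \mf{gl}(m|n)$ this is \cite{BKN3-2009}, while for $\mf{p}$ it is Theorem \ref{T: p proj equiv to tilting} above, and the inclusion $\mf{p}_{\pm 1} \hookrightarrow \mf{g}_{\pm 1}$ bridges the two via the rank variety.
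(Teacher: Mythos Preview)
Your proof is correct and follows essentially the same approach as the paper: both use the support/rank variety characterization of projectivity from \cite{BKN3-2009} for $\mf{g}$ together with Theorem~\ref{T: p proj equiv to tilting} for $\mf{p}$, and bridge them via the inclusion $\mf{p}_{\pm 1} \subseteq \mf{g}_{\pm 1}$ to conclude $\mc{V}_{\mf{p}_{\pm 1}}(M) \subseteq \mc{V}_{\mf{g}_{\pm 1}}(M) = \{0\}$. Your treatment of well-definedness on the stable category and the homomorphism property is slightly more explicit than the paper's, but the argument is the same.
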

\begin{proof}
Let $P$ be a projective module in
$\mc{F}_{(\mf{g}, \ev{\mf{g}})}$.  Then by
\cite[Theorem 3.5.1]{BKN3-2009},
$\mc{V}_{\mf{g}_1}(M) = \mc{V}_{\mf{g}_{-1}}(M) = \{ 0 \}$.  Using the rank variety description, we see that
$\mc{V}_{\mf{p}_1}(M) \subseteq \mc{V}_{\mf{g}_{1}}(M) = \{ 0 \}$ and
$\mc{V}_{\mf{p}_{-1}}(M) \subseteq \mc{V}_{\mf{g}_{-1}}(M) = \{ 0 \}$,
and so by Theorem \ref{T: p proj equiv to tilting}, $M|_{\mf{p}}$
is projective in $\mc{F}_{(\mf{p}, \ev{\mf{p}})}$.

With this conclusion, the restriction map now descends to a well defined
map on each of the respective stable module categories, and in particular,
if $M \in \mc{F}_{(\mf{g}, \ev{\mf{g}})}$, such that
$M \otimes M^* \cong k_{ev} \oplus P$, then
$(M \otimes M^*)|_{\mf{p}} \cong k_{ev} \oplus P|_{\mf{p}}$.

Furthermore, since restriction commutes with the tensor product, this is also a group
homomorphism.
\end{proof}

The following maps, which will be useful in Section
\ref{S: classification of T(g)}, now follow easily.

\begin{proposition} \label{P: b proj implies f proj}
Let $M$ be a projective module in $\F{p}$.
Then $M|_{\mf{p}^+}$ and $M|_{\mf{p}^-}$
and $M|_{\mf{f}}$ are all
projective in their respective categories.
\end{proposition}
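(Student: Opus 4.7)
The plan is to reduce each of the three projectivity claims to a vanishing statement for support varieties, relying on Theorem \ref{T: p proj equiv to tilting} together with the functoriality of rank varieties under restriction to a subsuperalgebra.

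First I would apply Theorem \ref{T: p proj equiv to tilting} to the hypothesis that $M$ is projective in $\F{p}$, extracting the equivalent statement $\mc{V}_{\mf{p}_1}(M) = \mc{V}_{\mf{p}_{-1}}(M) = \{0\}$. This single vanishing is the common input that will drive all three restriction claims.

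Next I would address $M|_{\mf{p}^+}$ and $M|_{\mf{p}^-}$ in parallel. The subalgebras $\mf{p}^+ = \mf{p}_0 \oplus \mf{p}_1$ and $\mf{p}^- = \mf{p}_{-1} \oplus \mf{p}_0$ each retain only one of the two $\Z$-graded odd pieces of $\mf{p}$, and their even parts coincide with $\ev{\mf{p}}$. The Kac-filtration machinery in Theorems \ref{T: Kac filtration conditions} and \ref{T: dual Kac filtration conditions} specializes to these one-sided categories with one filtration condition becoming vacuous, giving the natural analog of Theorem \ref{T: p proj equiv to tilting}: a module in $\mc{F}_{(\mf{p}^+, \ev{\mf{p}})}$ (respectively $\mc{F}_{(\mf{p}^-, \ev{\mf{p}})}$) is projective iff $\mc{V}_{\mf{p}_1}$ (respectively $\mc{V}_{\mf{p}_{-1}}$) vanishes on it. Since the rank variety is computed from the module structure over the abelian superalgebra $\mf{p}_{\pm 1}$ alone, one has $\mc{V}_{\mf{p}_{\pm 1}}(M|_{\mf{p}^\pm}) = \mc{V}_{\mf{p}_{\pm 1}}(M) = \{0\}$, and the one-sided criterion delivers the desired projectivity.

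For $M|_{\mf{f}}$, I would invoke the corresponding support variety characterization of projectives in $\F{f}$ from \cite{BKN3-2009}: namely, $M|_{\mf{f}}$ is projective iff $\mc{V}_{\mf{f}_1}(M|_{\mf{f}}) = \mc{V}_{\mf{f}_{-1}}(M|_{\mf{f}}) = \{0\}$. Because $\mf{f}_{\pm 1} \subseteq \mf{p}_{\pm 1}$ and the rank variety is defined by a cyclic-subalgebra condition, one has the inclusion $\mc{V}_{\mf{f}_{\pm 1}}(M|_{\mf{f}}) \subseteq \mc{V}_{\mf{p}_{\pm 1}}(M)$, which is $\{0\}$ by the first step, yielding projectivity of $M|_{\mf{f}}$.

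The main obstacle I expect is verifying the one-sided analog of Theorem \ref{T: p proj equiv to tilting} for $\mf{p}^\pm$; while the paper does not state this analog explicitly, the Kac filtration arguments in Theorems \ref{T: Kac filtration conditions} and \ref{T: dual Kac filtration conditions} simplify in the absence of one of the odd pieces, so I anticipate only routine bookkeeping rather than a genuine technical obstruction.
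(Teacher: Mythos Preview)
Your proposal is correct, and for the $\mf{f}$ restriction it matches the paper's argument exactly (rank variety inclusion $\mc{V}_{\mf{f}_{\pm 1}}(M) \subseteq \mc{V}_{\mf{p}_{\pm 1}}(M) = \{0\}$ and then \cite[Theorem 3.5.1]{BKN3-2009}). For $M|_{\mf{p}^{\pm}}$, however, the paper takes a more elementary route that avoids Theorems~\ref{T: Kac filtration conditions}--\ref{T: p proj equiv to tilting} entirely: using Proposition~\ref{P: main}(\ref{P: rel proj are summands of induced}), $M$ is a summand of $U(\mf{p}) \otimes_{U(\ev{\mf{p}})} N$, and the PBW factorization $U(\mf{p}) \cong U(\mf{p}^+)U(\mf{p}_{-1})$ gives an explicit isomorphism $U(\mf{p}) \otimes_{U(\ev{\mf{p}})} N \cong U(\mf{p}^+) \otimes_{U(\ev{\mf{p}})} [U(\mf{p}_{-1}) \otimes N]$, exhibiting $M$ as a summand of an induced $\mf{p}^+$-module and hence $(U(\mf{p}^+),U(\ev{\mf{p}}))$-projective. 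Your approach instead invokes a one-sided analog of Theorem~\ref{T: p proj equiv to tilting}, which is indeed true (with $(\mf{p}^+)_{-1}=0$ the Kac filtration condition becomes vacuous, the dual Kac modules coincide with the coinduced projectives, and a dual Kac filtration then splits by self-injectivity), so your anticipated bookkeeping goes through. The trade-off: the paper's PBW argument is shorter and uses no filtration machinery at all, while your approach is more uniform across all three restrictions but carries the overhead of formulating and checking the one-sided criterion.
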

\begin{proof}  Let $M$ be as above.
Then, by Proposition \ref{P: main}
(\ref{P: rel proj are summands of induced}), $M$ is a summand of
$\ind{p} N$ for some $U(\ev{\mf{p}})$-module $N$.  Because
\begin{gather*}
\ind{p} N \cong U(\mf{p}^+)U(\mf{p}_{-1}) \otimes_{U(\ev{\mf{p}})} N
\cong U(\mf{p}^+) \otimes_{U(\ev{\mf{p}})} [ U(\mf{p}_{-1}) \otimes N]
\end{gather*}
where the second isomorphism is given on basis elements by
$$
u^+ u_{-1} \otimes_{U(\ev{\mf{p}})} n \mapsto
	u^+ \otimes_{U(\ev{\mf{p}})} [u_{-1} \otimes n],
$$
any summand of $\ind{p} N$ is also a summand of
$U(\mf{p}^+) \otimes_{U(\ev{\mf{p}})} N'$ for some $\ev{\mf{p}}$-module
$N'$.  Thus, if $M$ is projective in
$\mc{F}_{(\mf{p}, \ev{\mf{p}})}$, $M$ is also
$\uplusrel{p}$-projective as well, or projective in
$\mc{F}_{(\mf{p}^+, \ev{\mf{p}})}$.
By a similar argument, $M$ is also
projective in $\mc{F}_{(\mf{p}^-, \ev{\mf{p}})}$.

Since the rank varieties
$\mc{V}_{\mf{p}_1}(M)$ and $\mc{V}_{\mf{p}_{-1}}(M)$ measure projectivity,
both varieties are $\{0\}$.
Additionally,
$\mc{V}_{\mf{f}_{\pm 1}}(M) \subseteq \mc{V}_{\mf{p}_{\pm 1}}(M) = 0$,
and so by \cite[Theorem 3.5.1]{BKN3-2009}, $M|_\mf{f}$ is projective
in $\mc{F}_{\rel{f}}$.
\end{proof}

\begin{corollary} \label{C: T(p) to T(f)}
A projective module in
$\mc{F}_{(\mf{p}, \ev{\mf{p}})}$ is also projective in
$\mc{F}_{(\mf{f}, \ev{\mf{f}})}$ and thus, there is a
well defined map
$$\op{res}^{T(\mf{p})}_{T(\mf{f})}: T(\mf{p}) \rightarrow T(\mf{f})$$ given by $M \mapsto M|_{\mf{f}}$.  Moreover, this map is a homomorphism of groups.
\end{corollary}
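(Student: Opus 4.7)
The proof should closely parallel the proof of Corollary \ref{C: T(g) to T(p)}, with Proposition \ref{P: b proj implies f proj} doing the heavy lifting in place of the support variety argument used there. The plan is to first verify that the restriction of a projective module in $\mc{F}_{(\mf{p}, \ev{\mf{p}})}$ to $\mf{f}$ is projective in $\mc{F}_{(\mf{f}, \ev{\mf{f}})}$, deduce that the set-theoretic restriction $M \mapsto M|_{\mf{f}}$ descends to a well-defined map on stable module categories, and finally confirm compatibility with the tensor product.

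For the first step, let $M$ be a projective module in $\mc{F}_{(\mf{p}, \ev{\mf{p}})}$. Proposition \ref{P: b proj implies f proj} asserts directly that $M|_{\mf{f}}$ is projective in $\mc{F}_{(\mf{f}, \ev{\mf{f}})}$. This is exactly the preservation statement required. Note also that the torus compatibility $\mf{t_f} \subseteq \ev{\mf{t}}$ guarantees that complete reducibility over $\ev{\mf{t}}$ implies complete reducibility over $\mf{t_f}$, so restriction takes modules in $\mc{F}_{(\mf{p}, \ev{\mf{p}})}$ to modules in $\mc{F}_{(\mf{f}, \ev{\mf{f}})}$ to begin with.

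For the well-definedness on stable categories, suppose $[M]$ is an endotrivial class in $T(\mf{p})$, so that $M \otimes M^* \cong k_{ev} \oplus P$ for some projective $P$ in $\mc{F}_{(\mf{p}, \ev{\mf{p}})}$. Restricting this isomorphism to $\mf{f}$ yields
$$
(M \otimes M^*)|_{\mf{f}} \cong k_{ev} \oplus P|_{\mf{f}},
$$
and by the previous paragraph $P|_{\mf{f}}$ is projective in $\mc{F}_{(\mf{f}, \ev{\mf{f}})}$. Hence $M|_{\mf{f}}$ represents a class in $T(\mf{f})$, and the assignment $[M] \mapsto [M|_{\mf{f}}]$ is independent of the choice of representative modulo projective summands.

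Finally, because restriction along a subalgebra inclusion is a tensor functor, $(M \otimes N)|_{\mf{f}} \cong M|_{\mf{f}} \otimes N|_{\mf{f}}$, so $\op{res}^{T(\mf{p})}_{T(\mf{f})}([M] + [N]) = \op{res}^{T(\mf{p})}_{T(\mf{f})}([M]) + \op{res}^{T(\mf{p})}_{T(\mf{f})}([N])$, making the map a group homomorphism. There is no real obstacle here: all the substance has been packaged into Proposition \ref{P: b proj implies f proj}, and the corollary is a formal consequence of the same template already used for $\op{res}^{T(\mf{g})}_{T(\mf{p})}$.
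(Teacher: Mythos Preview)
Your proposal is correct and matches the paper's intended argument: the paper states this corollary without proof immediately after Proposition~\ref{P: b proj implies f proj}, precisely because it follows from that proposition via the same template used in Corollary~\ref{C: T(g) to T(p)}, which is exactly what you have written out.
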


\subsection{Detecting Projectivity}
The second primary result about the relationship between $\F{p}$ and $\F{f}$ is that
if a module in $\F{p}$ is projective when restricted to $\F{f}$, then it is also
projective in $\F{p}$.  This result is derived from \cite{LNZ-2011} by observing that
the triangular decomposition of $\mf{p}$ will afford the same results as those given
for $\mf{g}$ a classical Type I Lie superalgebras.

One of the steps in the original
proof relies on an invariant
theory result about the action of the reductive group $G_0$ where
$\op{Lie}(G_0) = \mf{g}_0$ on the subalgebra
$\mf{f}_{\pm 1}$.
Namely that if $\phi \in \Hom_{G_0}(\mf{g}_{\pm 1}, k)$
such that $\phi|_{\mf{f}_{\pm 1}} \equiv 0$ then $\phi \equiv 0$.
This result is shown for $\mf{p}$ directly in the following lemma.

\begin{lemma} \label{L: f zero implies p zero}
Let $\mf{p}$ be the distinguished parabolic subalgebra with
triangular decomposition $\mf{p}_{-1} \oplus \mf{p}_0 \oplus
\mf{p}_{1}$ and $P_0$ be the algebraic group such that
$\op{Lie}(P_0) = \mf{p}_0$.  Let
$\mf{f}_i = \mf{f} \cap \mf{p}_i$ and
$\psi \in \Hom_{P_0}(\mf{p}_{\pm 1}, k)$
such that $\psi|_{\mf{f}_{\pm 1}} \equiv 0$.  Then
$\psi \equiv 0$.
\end{lemma}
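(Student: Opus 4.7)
The plan is to reduce the statement to a concrete Lie-algebraic spanning identity. Since $P_0$ is the connected algebraic group with Lie algebra $\mf{p}_0$ (namely the product of the standard Borel subgroups of $GL(m)$ and $GL(n)$), and since $k$ is the trivial one-dimensional $P_0$-module, the condition $\psi \in \Hom_{P_0}(\mf{p}_{\pm 1}, k)$ is equivalent, upon differentiating the identity $\psi(\op{Ad}(g)v) = \psi(v)$ at $g = e$, to saying that $\psi([Y,v]) = 0$ for every $Y \in \mf{p}_0$ and $v \in \mf{p}_{\pm 1}$. Consequently, it suffices to establish the vector space equality
\[
\mf{p}_{\pm 1} \;=\; \mf{f}_{\pm 1} \,+\, [\mf{p}_0, \mf{p}_{\pm 1}],
\]
for then any $\psi$ vanishing on $\mf{f}_{\pm 1}$ and on $[\mf{p}_0, \mf{p}_{\pm 1}]$ is forced to vanish identically.

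To verify the equality for $\mf{p}_1$, I would work in the root vector basis $\{e_{i,m+j} : 1 \le i \le m,\ 1 \le j \le n,\ j \ge i\}$. The diagonal vectors $j = i$ are precisely a basis of $\mf{f}_1$, so only the strictly off-diagonal cases $j > i$ need attention, and these split naturally in two. When $j \le m$, the identity
\[
[e_{i,j},\, e_{j, m+j}] \;=\; e_{i, m+j}
\]
exhibits the desired vector as the bracket of $e_{i,j} \in \mf{p}_0$ (an upper-triangular generator in the $\mf{gl}(m)$-block) with $e_{j, m+j} \in \mf{f}_1$. When $j > m$ (which can only occur if $n > m$), one instead uses
\[
[e_{m+i, m+j},\, e_{i, m+i}] \;=\; -\,e_{i, m+j},
\]
where $e_{m+i, m+j} \in \mf{p}_0$ is an upper-triangular generator in the $\mf{gl}(n)$-block and $e_{i, m+i} \in \mf{f}_1$. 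Since $i \le m \le \min(m,n)$ in both sub-cases, these cover every strictly off-diagonal basis vector of $\mf{p}_1$.

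The case of $\mf{p}_{-1}$ is entirely parallel: its root vectors $e_{m+i, j}$ with $j \ge i$ have the on-diagonal ones ($j = i$) spanning $\mf{f}_{-1}$, and the strictly off-diagonal ones are again recovered as brackets of $\mf{p}_0$-elements with generators of $\mf{f}_{-1}$ via the transposed identities. Combining the two decompositions yields $\mf{p}_{\pm 1} = \mf{f}_{\pm 1} + [\mf{p}_0, \mf{p}_{\pm 1}]$, which by the first paragraph forces $\psi \equiv 0$. No substantial obstacle is expected; the only real care is bookkeeping the index ranges so that the elements used in the brackets genuinely lie in $\mf{p}_0$ rather than escaping into the full $\mf{gl}(m|n)$.
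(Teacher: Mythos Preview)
Your bracket computations are correct and the spanning identity $\mf{p}_{\pm 1} = \mf{f}_{\pm 1} + [\mf{p}_0,\mf{p}_{\pm 1}]$ holds, so if $\psi$ were a \emph{linear} $P_0$-map your argument would be complete and cleaner than the paper's. The difficulty is that this is not what the paper is proving. The author's proof invokes Zariski closedness of $\psi^{-1}(c')$, continuity of $\psi$, and limits under iterated one-parameter subgroup actions, none of which would be needed for a linear functional; despite the notation $\Hom_{P_0}(\mf{p}_{\pm 1},k)$, the intended meaning is that $\psi$ is a $P_0$-invariant \emph{regular} (polynomial) function on $\mf{p}_{\pm 1}$. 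This reading is also forced by the application: the lemma is the invariant-theoretic input to the LNZ-style proof that $\H^n(\mf{p},\ev{\mf{p}};M)\to\H^n(\mf{f},\ev{\mf{f}};M)$ is injective, and that argument runs through the full graded invariant ring, not merely its degree-one piece.

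For polynomial $\psi$ of degree greater than one the spanning identity is not enough: differentiating invariance yields a derivation condition, not ``$\psi$ vanishes on $[\mf{p}_0,\mf{p}_{\pm 1}]$'', and vanishing on a linear spanning set says nothing about higher-degree polynomials. The paper's route is to show that the $P_0$-orbit closure of any $P\in\mf{p}_1$ contains its diagonal part $X\in\mf{f}_1$, so a continuous invariant is determined by its restriction to $\mf{f}_1$. Your brackets can be salvaged for the polynomial statement, however: they show that at a point $v\in\mf{f}_1$ with all diagonal entries nonzero the differential of the action map $P_0\times\mf{f}_1\to\mf{p}_1$ has image $[\mf{p}_0,v]+\mf{f}_1=\mf{p}_1$, whence $P_0\cdot\mf{f}_1$ is Zariski dense in $\mf{p}_1$, and that density forces any invariant regular function vanishing on $\mf{f}_1$ to vanish identically. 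Adding this density step would close the gap.
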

\begin{proof}
Without loss of generality, the proof is given for
$\mf{f}_1$ where $\mf{p} \subseteq
\mf{gl}(m|n)$ with $n > m$ as all other cases follow
in a similar way.

In this instance, consider the given bases for these
subalgebras.  As defined in \ref{SS: dist para},
$\mf{f}_1 \subseteq \mf{p}_1$ can be thought of as
elements of $M_{m,n}(k)$ embedded in the block upper
triangular corner of $(m+n) \times (m+n)$ matrices.  So if
$0_{i,j}$ is an $i \times j$ matrix of zeros, and
\[ M =
\left(
\begin{array}{c|c}
  0_{m,m} & N \\   \hline
  0_{n,m} & 0_{n,n} \\
\end{array}
\right)
\]
where $N$ is of the form
\[ 
\left(
\begin{array}{ccccccc}
  x_1 & a_2 & a_3 & . & . & . & a_n \\ 
  0   & x_2 & b_3 & . & . & . & b_n \\
  0   & 0   & x_3 & . & . & . & c_n \\
  \vdots & \vdots  & & \ddots & & & \vdots \\
  0 & 0 & \dots & 0 & x_m & \dots & d_n
\end{array}
\right)
\]
then $M \in \mf{f}_1$ if the only (possibly) nonzero entries are
the $x_i$'s, i.e. on the odd diagonal,
and $M$ is in $\mf{p}_1$
if arbitrary variables shown above are allowed to be nonzero.

Now let $\psi$ be as above and for a fixed element
$P \in \mf{p}_1$, let $X$ be the matrix
whose $x_i$ entries are those in $P$ and all others zero, $A$ be
the matrix whose $a_i$ entries are those in $P$ (all others
zero), etc. and so
we can decompose $P$ by writing $P = X + A + B + \dots + D$.
By definition, $\psi(X) = 0$

We proceed by contradiction and an iterated argument using the
number of rows $m$.  So without loss of generality,
assume that $\psi(P) \neq 0$ and that the matrix $A \neq 0$
(otherwise proceed to the next iteration).

If $T_i$ denotes (not strictly)
upper triangular $i \times i$ matrices, then
$P_0 \cong T_m \oplus T_n$ and the action on $\mf{p}_1$
is given by $(G,H)\cdot M = GMH\inv$ for $(G,H) 
\in P_0$ and $M \in \mf{p}_1$.
By assumption, $\psi$ is a $P_0$ invariant
function so the action of $P_0$
on $\mf{p}_1$ does not change the value of the function.
Let $I_i(j,c)$ denote the $i \times i$ identity matrix
where the $j$th diagonal entry is replaced by the constant
$c \in k$
and consider the action of $I_{1,c} := (I_m(1,c),I_n(1,c))$ on the
element $P$, where $0 < c < 1$.
By construction $I_{1,c} \cdot P
=X + cA + B + \dots + D$ and by iterating the action $\ell$
times,
$(I_{1,c})^\ell \cdot P = X + c^\ell A + B + \dots + D$.  Since
$\psi$ is $P_0$ invariant,
$$
\psi((I_{1,c})^\ell \cdot P) = \psi(P) = c' \neq 0
$$
and so $(I_{1,c})^\ell \cdot P \in \psi\inv(c')$ for all $\ell > 0$.
Furthermore, $c' \in k$ is a closed set and $\psi$ is
continuous  so
$\psi\inv(c')$ is closed in the Zariski topology of $\mf{p}_1$
and contains its limit points under the action of $P_0$.
We conclude that
$$
X + B + \dots + D = \lim\limits_{\ell \to \infty} (I_{1,c})^\ell
\cdot P \in \psi\inv(c')
$$
and so $\psi(X + B + \dots + D) = c'$.

Now this argument may be repeated by considering the
action of $I_{2,c}$ on $X + B + \dots + D$, and so on
until the action of $I_{m,c}$ on $X + D$ yields that
$X  = \lim\limits_{\ell \to \infty} (I_{n,c})^\ell
\cdot (X + D) \in \psi\inv(c')$, and thus $\psi(X) = c' \neq 0$.
This is a contradiction and we
conclude that the assumption was false.  So
$\psi(P) = 0$ for any $P \in \mf{p}_1$ and $\psi \equiv 0$.
\end{proof}

\begin{theorem}
For all $M \in \F{p}$ and $n \neq 0$ the restriction map
$$
\H^n(\mf{p}, \ev{\mf{p}}, M) \rightarrow \H^n(\mf{f}, \ev{\mf{f}}, M)
$$
is injective.
\end{theorem}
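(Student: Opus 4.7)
The plan is to adapt the proof of the analogous statement for classical Type I Lie superalgebras in \cite{LNZ-2011}, which applies to our setting because the proof there relies only on two structural inputs that now hold for $\mf{p}$: the triangular decomposition $\mf{p} = \mf{p}_{-1} \oplus \mf{p}_0 \oplus \mf{p}_1$ with $\mf{p}_0 = \ev{\mf{p}}$ and abelian $\mf{p}_{\pm 1}$, and the invariant-theoretic Lemma \ref{L: f zero implies p zero}. The analogous facts hold verbatim for $\mf{f} \subseteq \mf{p}$. Since $\mf{p}/\ev{\mf{p}} \cong \od{\mf{p}} = \mf{p}_{-1} \oplus \mf{p}_1$ is purely odd as an $\ev{\mf{p}}$-module, the relative Chevalley--Eilenberg cochains take the form
\begin{equation*}
C^n(\mf{p}, \ev{\mf{p}}; M) = \Hom_{\ev{\mf{p}}}\bigl(S^n(\mf{p}_{-1} \oplus \mf{p}_1), M\bigr),
\end{equation*}
with the analogous description for $\mf{f}$, and the map in the theorem is induced on cochains by dualizing the inclusion $\mf{f}_{-1} \oplus \mf{f}_1 \hookrightarrow \mf{p}_{-1} \oplus \mf{p}_1$.

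First I would show that the restriction map is already injective at the level of $\ev{\mf{p}}$-invariant cochains. Using that $\ev{\mf{p}}$-equivariance integrates to $P_0$-equivariance, a cochain in the kernel is a $P_0$-equivariant polynomial map $S^n(\od{\mf{p}}) \to M$ that vanishes on $S^n(\od{\mf{f}})$. Decomposing $M$ into its $\ev{\mf{t}}$-weight spaces --- which is possible because modules in $\mc{F}_{\rel{p}}$ are completely reducible over $\ev{\mf{t}}$ --- reduces the question to the case of one dimensional target. Lemma \ref{L: f zero implies p zero} is then the case $n=1$, and a polarization argument extends it to arbitrary $n$: polarize to a symmetric $n$-linear form and apply the lemma one slot at a time, fixing the remaining slots in $\od{\mf{f}}$ and using that the iterated one-parameter subgroup limits in the proof of Lemma \ref{L: f zero implies p zero} preserve $P_0$-equivariance at each step.

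Injectivity on cochains does not immediately yield injectivity on cohomology, since a cocycle on $\mf{p}$ whose restriction to $\mf{f}$ is a coboundary need not itself be a coboundary on the nose. To close this gap I would, following \cite[Thm.~3.1]{LNZ-2011}, lift any primitive for the restricted cocycle to a $P_0$-equivariant primitive on $\mf{p}$ by applying the invariant-theoretic lemma to the cochain-level difference; the cocycle minus the coboundary of this lift then vanishes on $\mf{f}$ and, by the cochain-level injectivity just established, equals zero. The main obstacle will be verifying that the possibly non-reductive group $P_0$ still affords enough one-parameter subgroups for the iterated limit argument of Lemma \ref{L: f zero implies p zero} to succeed at the lifting step; this is the one place where the proof for $\mf{p}$ departs in substance from the reductive setting of \cite{LNZ-2011}, and handling it will amount to checking that the diagonal toric subgroups $I_{i,c}$ used in the proof of Lemma \ref{L: f zero implies p zero} are rich enough to produce the required $P_0$-equivariant lift one degree at a time.
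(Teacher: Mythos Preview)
Your overall strategy---adapt the argument of \cite{LNZ-2011} to $\mf{p}$ via the triangular decomposition and Lemma~\ref{L: f zero implies p zero}---is exactly the paper's: its proof is the single sentence that the argument goes through as in \cite{LNZ-2011} because $\mf{p}_{\pm 1}$ is an ideal of $\mf{p}$, together with Proposition~\ref{P: b proj implies f proj} and Lemma~\ref{L: f zero implies p zero}.

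Your more detailed sketch, however, has a few inaccuracies relative to how the \cite{LNZ-2011} argument actually runs. First, Lemma~\ref{L: f zero implies p zero} as stated and proved in the paper already treats arbitrary $P_0$-invariant \emph{polynomial} functions on $\mf{p}_{\pm 1}$ (the proof takes Zariski limits under one-parameter subgroups and uses that preimages of points are closed), not just linear functionals; it is not ``the case $n=1$'' of anything, and no polarization step is needed. Second, your reduction to a one-dimensional target by decomposing $M$ into $\ev{\mf{t}}$-weight spaces breaks $P_0$-equivariance: the weight components $\phi_\mu$ of a $P_0$-equivariant cochain $\phi$ are only torus-equivariant, since $M_\mu$ is not a $P_0$-submodule, so Lemma~\ref{L: f zero implies p zero} cannot be applied to them. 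Third, you omit Proposition~\ref{P: b proj implies f proj}, which the paper explicitly lists as an ingredient; in the \cite{LNZ-2011} framework this is what guarantees that restriction sends the relative projective resolutions used to compute $\H^\bullet(\mf{p},\ev{\mf{p}};-)$ to resolutions computing $\H^\bullet(\mf{f},\ev{\mf{f}};-)$, so that the comparison map is set up correctly before the invariant-theoretic input is invoked. The direct cochain-level injectivity you propose is not how \cite{LNZ-2011} proceeds, and the lifting-of-primitives step you flag as the main obstacle is in fact bypassed there.
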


The proof is the same as in \cite{LNZ-2011} since $\mf{p}_{\pm 1}$ is an ideal of $\mf{p}$
and by use of Proposition \ref{P: b proj implies f proj} and
Lemma \ref{L: f zero implies p zero}.  This powerful result will be used in the form of
the following corollary.

\begin{corollary} \label{C: f detects p proj}
Let $M \in \F{p}$ such that $M|_{\mf{f}}$ is projective in $\F{f}$.  Then $M$ is projective
in $\F{p}$.
\end{corollary}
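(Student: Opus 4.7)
The plan is to deduce the corollary directly from the preceding theorem about the injectivity of the cohomology restriction map, combined with the standard identification between relative Ext groups and relative cohomology of a Hom module.

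First I would recall the identification $\Ext^n_{\F{p}}(M,N) \cong \H^n(\mf{p},\ev{\mf{p}}; \Hom_k(M,N))$ (and likewise for $\mf{f}$), valid for any $M,N \in \F{p}$. This lets me rephrase everything in terms of the cohomology groups that appear in the theorem. Second, for a fixed $N \in \F{p}$, I would apply the preceding theorem to the module $\Hom_k(M,N)$ to obtain, for each $n \neq 0$, an injection
$$
\Ext^n_{\F{p}}(M,N) \;\cong\; \H^n(\mf{p},\ev{\mf{p}};\Hom_k(M,N)) \;\hookrightarrow\; \H^n(\mf{f},\ev{\mf{f}};\Hom_k(M,N)|_{\mf{f}}).
$$

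Third, I would use the compatibility $\Hom_k(M,N)|_{\mf{f}} = \Hom_k(M|_{\mf{f}},N|_{\mf{f}})$ to rewrite the right-hand side as $\Ext^n_{\F{f}}(M|_{\mf{f}},N|_{\mf{f}})$. By hypothesis $M|_{\mf{f}}$ is projective in $\F{f}$, so this Ext group vanishes for every $n > 0$ and every $N$. Injectivity of the restriction then forces $\Ext^n_{\F{p}}(M,N) = 0$ for all $n > 0$ and all $N \in \F{p}$. In particular, vanishing of $\Ext^1_{\F{p}}(M,-)$ is the standard criterion for $(\U{p},U(\ev{\mf{p}}))$-projectivity, so $M$ is projective in $\F{p}$, completing the proof.

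The proof is short because the technical work has already been done in the preceding theorem and in Lemma \ref{L: f zero implies p zero}; the only substantive step in the corollary itself is matching up relative Ext with relative cohomology of $\Hom_k(M,N)$ so that the injection provided by the theorem can be applied. The main conceptual point to verify along the way is that restriction on the cohomology side is compatible with restriction on the Ext side (i.e.\ that the map in the theorem really does encode the restriction of Ext classes), but this is routine from the construction of both sides via the same relative projective resolution.
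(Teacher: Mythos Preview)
Your proposal is correct and follows essentially the same route as the paper: apply the preceding injectivity-of-restriction theorem to a tensor/Hom module and use the identification of relative $\Ext$ with relative cohomology. The only difference is which slot $M$ occupies: the paper tests $\H^1(\mf{p},\ev{\mf{p}};M\otimes S^*)\cong\Ext^1_{\F{p}}(S,M)$ against simple $S$, concluding that $M$ is injective and then invoking self-injectivity of $\F{p}$, whereas you test $\Ext^n_{\F{p}}(M,N)$ against all $N$ and conclude projectivity directly. Your variant is marginally cleaner in that it avoids the appeal to self-injectivity, while the paper's variant only needs $n=1$ and simple test objects; neither difference is substantive.
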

\begin{proof}
Let $S$ be a simple module in $\F{p}$.  Then 
$$
\H^1(\mf{p}, \ev{\mf{p}}, M \otimes S^*) \hookrightarrow \H^1(\mf{f}, \ev{\mf{f}}, M \otimes S^*) \cong \Ext^1_{\F{f}}(S, M) = 0
$$
since $M|_{\mf{f}}$ is projective in $\F{f}$.  Thus $\Ext^1_{\F{p}}(S, M) = 0$ as
well and $M$ is projective in $\F{p}$.
\end{proof}

\section{Restriction from $T(\mf{g})$ to $T(\mf{p})$} \label{S: res from Tg to Tp}
Let $\mf{g} = \mf{gl}(m|n)$ and $\mf{p} \subseteq \mf{g}$ be the
distinguished parabolic.
Since restriction from $T(\mf{g})$ to $T(\mf{p})$ is well defined
(Corollary \ref{C: T(g) to T(p)}),
properties of this map can be exploited to relate a classification of
one to the other.  An important step in understanding the
relationship between these two groups
is an induction functor from $\mf{p}$ to $\mf{g}$.

In \cite[Section 3]{GS-2010},
the geometric induction functor $\Gamma_0$ is defined.  The functor
$\Gamma_0$ is from $\mf{p}$-modules to $\mf{g}$-modules and
will be denoted $\Ind_\mf{p}^{\mf{g}}$ since the geometric structure
will not be emphasized in this paper.  This functor is of particular
interest because it will allow us to show that restriction map
$$\op{res}^{T(\mf{g})}_{T(\mf{p})}: T(\mf{g}) \rightarrow T(\mf{p})$$ given by $M \mapsto M|_{\mf{p}}$
is injective by
checking that $\ker \left( \op{res}^{T(\mf{g})}_{T(\mf{p})} \right)=
\{ k_{ev} \}$ since $k_{ev}$ is the identity in $T(\mf{g})$.

The first step in the proof is to show that
$\Ind_\mf{p}^{\mf{g}} k_{ev} = k_{ev}$.
This is done by considering
\cite[Lemma 3]{GS-2010} and its proof.  In particular,
the authors observe that if $L_{\mu}$ (respectively $L_{\mu}(\mf{a})$)
is the simple $\mf{g}$-module (respectively $\mf{a}$-module) with
highest weight $\mu$, then if $L_{\mu}$ occurs in
$\Ind_\mf{p}^{\mf{g}} k_\lambda$,
then $L_{\mu}(\ev{\mf{g}})^*$ occurs in
$H^0(G_0/P_0, \mc{L}^*_\lambda(\mf{p}) \otimes S^{\bullet} (\mf{g}/(\ev{\mf{g}} \oplus \od{\mf{p}}))^*)$.

The case when $k_\lambda$ is the trivial module $k_{ev}$ is of particular interest
as noted above.  Thus we consider
$H^0(G_0/P_0, S^{\bullet} (\mf{g}/(\ev{\mf{g}} \oplus \od{\mf{p}}))^*)$,
and more specifically, the dominant weights in
$S^{\bullet} (\mf{g}/(\ev{\mf{g}} \oplus \od{\mf{p}}))^*$.
In order for such a
weight to be dominant, it must have positive inner product with
$\varepsilon_1 - \varepsilon_2 , \varepsilon_2 - \varepsilon_3, \dots ,
\varepsilon_{m-1} - \varepsilon_m$ and
$\delta_1 - \delta_2, \delta_2 - \delta_3, \dots, \delta_{n-1} - \delta_n$.
The weights of $S^{\bullet} (\mf{g}/(\ev{\mf{g}} \oplus \od{\mf{p}}))^*$
are positive linear combinations of the weights of the form
$\varepsilon_i - \delta_j$ and
$\delta_i - \varepsilon_j$ where $i > j$.
\begin{proposition} \label{P: glnn no dominant}
Let $\mf{p} \subseteq \mf{g} = \mf{gl}(n|n)$.
No weight of $S^{\bullet} (\mf{g}/(\ev{\mf{g}} \oplus \od{\mf{p}}))^*$ is
dominant.
\end{proposition}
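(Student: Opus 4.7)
The plan is to parametrize a general weight $\mu$ of $S^\bullet(\mf{g}/(\ev{\mf{g}} \oplus \od{\mf{p}}))^*$ in coordinates and then pinch it using dominance. Every such $\mu$ is a non-negative integer combination of the stated generators, so I would write
$$
\mu = \sum_{i > j} a_{ij}(\varepsilon_i - \delta_j) + \sum_{i > j} b_{ij}(\delta_i - \varepsilon_j),
$$
with $a_{ij}, b_{ij} \in \Z_{\geq 0}$, and let $C_p$ and $D_q$ denote the coefficients of $\varepsilon_p$ and $\delta_q$ in $\mu$, respectively.

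The crux is the asymmetry produced by the square shape of $\mf{gl}(n|n)$: all indices lie in $\{1,\ldots,n\}$ and the constraint $i > j$ is strict, so the index $1$ never appears as the ``upper'' index and the index $n$ never appears as the ``lower'' index. Consequently $\varepsilon_1$ appears in $\mu$ only with a negative sign (from the terms $\delta_k - \varepsilon_1$ with $k > 1$), while $\varepsilon_n$ appears only with a positive sign (from the terms $\varepsilon_n - \delta_j$ with $j < n$). Hence
$$
C_1 = -\sum_{k > 1} b_{k1} \leq 0 \quad \text{and} \quad C_n = \sum_{j < n} a_{nj} \geq 0,
$$
and the symmetric argument on the $\delta$-coordinates gives $D_1 \leq 0$ and $D_n \geq 0$.

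Now dominance forces $C_1 \geq C_2 \geq \cdots \geq C_n$ and $D_1 \geq \cdots \geq D_n$. Combining $C_1 \geq C_n$ with $C_1 \leq 0 \leq C_n$ pinches $C_1 = C_n = 0$, and by monotonicity $C_p = 0$ for every $p$; the identical argument shows $D_q = 0$ for every $q$. Therefore $\mu = 0$, so the only candidate dominant weight lies in the trivial summand $S^0 = k$, and no nontrivial weight of $S^\bullet(\mf{g}/(\ev{\mf{g}} \oplus \od{\mf{p}}))^*$ is dominant, which is the content of the proposition. (If one instead reads ``dominant'' strictly, as a literal positive inner product with each simple root, then $C_1 > C_n$ is already incompatible with $C_1 \leq 0 \leq C_n$ and the proof terminates even faster.)

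The main obstacle is really just the bookkeeping in the second step — carefully isolating which generators can contribute to the extreme coordinates $\varepsilon_1$, $\varepsilon_n$, $\delta_1$, $\delta_n$ — which turns on the fact that there are exactly as many $\varepsilon$ indices as $\delta$ indices. Once that asymmetry is made explicit, dominance has no room to accommodate a nonzero weight, and the proof reduces to the short pinching argument above.
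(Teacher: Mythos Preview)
Your argument is correct and genuinely different from the paper's. The paper proceeds by induction on $n$: it sums all $2(n-1)$ dominance inequalities $\langle \varepsilon_s - \varepsilon_{s+1}, \lambda\rangle \ge 0$ and $\langle \delta_s - \delta_{s+1}, \lambda\rangle \ge 0$, observes that the result telescopes to $0 \ge \sum_s (a_{s,1}+a_{n,s}+b_{s,1}+b_{n,s})$, kills those boundary coefficients, and then appeals to the $(n-2)$ case. You instead work directly with the coordinate functions $C_p$ and $D_q$ of $\mu$, note the sign constraints $C_1\le 0\le C_n$ and $D_1\le 0\le D_n$ forced by the strict inequality $i>j$, and pinch against the monotonicity $C_1\ge\cdots\ge C_n$, $D_1\ge\cdots\ge D_n$ coming from dominance. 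This is shorter and avoids induction entirely; the paper's telescoping is essentially a disguised version of the same inequality $C_1 - C_n + D_1 - D_n \ge 0$, but your coordinate formulation makes the mechanism transparent. One small expository slip: the sentence ``the only candidate dominant weight lies in the trivial summand $S^0=k$'' asserts more than you proved, since a priori the zero weight could occur in higher symmetric powers (in fact it does not, but that requires the further step of unwinding $C_p=D_q=0$ back to $a_{ij}=b_{ij}=0$); however this is irrelevant to the proposition, which is about weights rather than weight vectors, and your conclusion that no nonzero weight is dominant is exactly what is needed.
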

\begin{proof}
This will be proven by induction on $n$.  The first case is trivial
since when $n=1$, $\od{\mf{p}} = \od{\mf{g}}$ and
$\mf{g} = \supalg{\mf{g}}$.

The first nontrivial base case is when $n = 2$.  If the weights
$\varepsilon_2 - \delta_1$ and $\delta_2 - \varepsilon_1$ are represented
as $(0,1|-1,0)$ and $(-1,0|0,1)$ respectively, then a positive linear
combination of such weights $r(\varepsilon_2 - \delta_1) +
s(\delta_2 - \varepsilon_1)$ is represented as $(-s,r|-r,s)$.  We compute
\begin{gather*}
\langle (1,-1|0,0) ,(-s,r|-r,s) \rangle = -s -r \\
\langle (0,0|1,-1) , (-s,r|-r,s) \rangle = -s -r
\end{gather*}
and so any nonzero weight has negative inner product and thus, is not
dominant.

Now let $n > 2$.  In order for a positive linear combination of weights
to be dominant, there is a set of conditions which must be satisfied.
Let $\lambda$ be an arbitrary weight and let
$a_{i,j}$ be the coefficient for the weight
$\varepsilon_i - \delta_j$ and $b_{k,l}$ be the coefficient of
the weight $\delta_k - \varepsilon_l$, where $i > j$ and $k > l$.
Then
$$
\lambda = \left( \sum_{i > j} a_{i,j}(\varepsilon_i - \delta_j) \right) 
+ \left( \sum_{k > l} b_{k,l}(\delta_k - \varepsilon_l) \right)
$$
or if we denote $\alpha_{i,j} = a_{i,j}(\varepsilon_i - \delta_j)$
and $\beta_{k,l} = b_{k,l}(\delta_k - \varepsilon_l)$, then
$\lambda = \sum_{i > j} (\alpha_{i,j} + \beta_{i,j})$.

Note that
\begin{align*}
\langle \varepsilon_s - \varepsilon_{s+1}, \alpha_{i,j} \rangle
& = \delta_{s, i}a_{i,j} - \delta_{s+1, i}a_{i,j} \\
\langle \varepsilon_s - \varepsilon_{s+1}, \beta_{i,j} \rangle
& = -\delta_{s, j}b_{i,j} + \delta_{s+1,j}b_{i,j} \\
\langle \delta_s - \delta_{s+1}, \alpha_{i,j} \rangle
&= -\delta_{s, j}a_{i,j} + \delta_{s+1,j}a_{i,j} \\
\langle \delta_s - \delta_{s+1}, \beta_{i,j} \rangle
&= \delta_{s, j}b_{i,j} - \delta_{s+1,j}b_{i,j}
\end{align*}
where $\delta_{s,t}$ is the Kronecker delta.  We note that the conditions
$\langle \varepsilon_s - \varepsilon_{s+1}, \lambda \rangle \geq 0$ and
$\langle \delta_s - \delta_{s+1}, \lambda \rangle \geq 0$ for each
$s = 1, \dots, n-1$ give $2(n-1)$ inequalities which the coefficients
$a_{i,j}$ and $b_{i,j}$ must satisfy.

The important step in this proof is to add all the given inequalities
together to produce one inequality,
\begin{gather*}
\sum_{s=1}^{n-1} \left(
\langle \varepsilon_s - \varepsilon_{s+1}, \lambda \rangle +
\langle \delta_s - \delta_{s+1}, \lambda \rangle
\right) = \\
\sum_{s=1}^{n-1} \sum_{i > j} \left(
\langle \varepsilon_s - \varepsilon_{s+1},\alpha_{i,j} + \beta_{i,j} \rangle
+ \langle \delta_s - \delta_{s+1},\alpha_{i,j} + \beta_{i,j} \rangle
\right) \geq 0.
\end{gather*}
Next, observe
that each $a_{i,j}$ and $b_{i,j}$ appears exactly twice as a negative
term in the inequality.  Furthermore, each term $a_{k,l}$ and
$b_{k,l}$ with $1 < k,l < n$ appears twice as a positive term and
$a_{i,1}$, $a_{n,j}$, $b_{i,1}$, and $b_{n,j}$ appear at most once
as a positive term (with $a_{1,n}$ and $b_{1,n}$ being the terms which
do not appear at all).
Rearranging the inequality then yields
$$
0 \geq \sum_{s = 1}^n (a_{s,1} + a_{n,s} +  b_{s,1} + b_{n,s} )
$$
and so each coefficient of this form is forced to be zero in order
for a weight to be dominant.  However, by induction, we have now reduced
to a weight whose nonzero coefficients come from a lower diagonal
$(n-2) \times (n-2)$  matrix which has no dominant weights
by the inductive hypothesis.  Thus,
the claim is proven.
\end{proof}

In order to handle the general case of classifying $T(\mf{gl}(m|n))$
when $m \neq n$,
a slight modification to the previous argument must be made.  Although
the following proof suffices in general, the previous special
case is included
as it is helpful in clarifying this argument.

Define $r = \min(m,n)$ and $s = |m - n|$.
When $m \neq n$, the parabolic 
subalgebra $\mf{p}$ is now a bit different.
As detailed in Section \ref{SS: dist para}
The even component still consists of upper triangular matrices
(now of different sizes) but
the odd component is structurally different.  Since $\mf{g}_1$ and
$\mf{g}_{-1}$ are no longer square (they are $m \times n$ and $n \times m$),
the entries above the odd diagonal are no longer symmetric.  In particular,
$\dim(\mf{p}_1) \neq \dim (\mf{p}_{-1})$.  There is a subalgebra of
$\mf{p}$ isomorphic to the distinguished parabolic of $\mf{gl}(r|r)$ and
the previous argument can be applied to this subalgebra with only
a slight modification.

\begin{proposition}
Let $\mf{p} \subseteq \mf{g} = \mf{gl}(m|n)$.
No weight of $S^{\bullet} (\mf{g}/(\ev{\mf{g}} \oplus \od{\mf{p}}))^*$ is
dominant.
\end{proposition}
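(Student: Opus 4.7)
The plan is to reduce the general case to the $m=n$ case of Proposition~\ref{P: glnn no dominant}. Without loss of generality assume $m \le n$; the case $m > n$ is entirely symmetric, obtained by swapping the roles of $\varepsilon$ and $\delta$ (and of $\mf{p}_1$ and $\mf{p}_{-1}$). As in the previous proof, write an arbitrary positive linear combination of the weights in question as
\[
\lambda \;=\; \sum_{1 \le j < i \le m} a_{i,j}\bigl(\varepsilon_i - \delta_j\bigr) \;+\; \sum_{\substack{1 \le l \le m \\ l < k \le n}} b_{k,l}\bigl(\delta_k - \varepsilon_l\bigr),
\]
with all $a_{i,j}, b_{k,l} \ge 0$, and let $c_p$ and $d_q$ denote the coefficients of $\varepsilon_p$ and $\delta_q$ in $\lambda$, respectively. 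The only new feature compared to the square case is the appearance of weights $\delta_k - \varepsilon_l$ with $m < k \le n$, which have no analogue when $m = n$.

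The strategy is to eliminate precisely these extra terms and then invoke the previous proposition. A direct computation gives $d_1 = -\sum_{i=2}^m a_{i,1} \le 0$, and the dominance chain $d_1 \ge d_2 \ge \cdots \ge d_n$ therefore forces $d_q \le 0$ for every $q$. On the other hand, for $q > m$ no $\alpha$-term contributes to $d_q$ (a contribution would require $q < i \le m$), so
\[
d_q \;=\; \sum_{l=1}^{m} b_{q,l} \;\ge\; 0.
\]
Combining the two inequalities forces $d_q = 0$, and hence $b_{q,l} = 0$ for every $l$, whenever $q > m$.

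After this reduction, $\lambda$ is supported on the weights $\varepsilon_i - \delta_j$ with $1 \le j < i \le m$ and $\delta_k - \varepsilon_l$ with $1 \le l < k \le m$, which are precisely the weights appearing in $S^{\bullet}\bigl(\mf{g}'/(\ev{\mf{g}'} \oplus \od{\mf{p}'})\bigr)^*$ for the distinguished parabolic $\mf{p}'$ of $\mf{g}' = \mf{gl}(m|m)$. Since the $\mf{gl}(m|m)$-dominance inequalities form a subset of the $\mf{gl}(m|n)$-dominance inequalities, $\lambda$ remains dominant as a $\mf{gl}(m|m)$-weight, and Proposition~\ref{P: glnn no dominant} yields $\lambda = 0$. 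The only mildly delicate point is the simultaneous use of the dominance chain to propagate $d_1 \le 0$ to all $d_q$ and the direct formula $d_q = \sum_l b_{q,l}$ for $q > m$; once these are placed side by side, the reduction to the square case is immediate.
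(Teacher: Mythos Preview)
Your proof is correct and takes a genuinely different route from the paper's. The paper works in the opposite order: for $m > n$ it first peels off the ``square'' $\mf{gl}(r|r)$ portion by an inductive argument on $r$ (summing only the $\varepsilon_t - \varepsilon_{t+1}$ inequalities for $t \le r-1$ to force the edge coefficients $a_{r,t}$ and $b_{t,1}$ to vanish, then repeating), and only afterwards kills the residual non-square coefficients $a_{i,j}$ with $i > r$ row by row using the remaining $\varepsilon$-inequalities. Your argument reverses this: you first dispose of the non-square terms $b_{q,l}$ with $q > m$ via the single observation that the dominance chain $d_1 \ge d_2 \ge \cdots \ge d_n$ together with $d_1 \le 0$ forces $d_q \le 0$ everywhere, while for $q > m$ the formula $d_q = \sum_l b_{q,l}$ gives $d_q \ge 0$. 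After that, you invoke Proposition~\ref{P: glnn no dominant} as a black box rather than reproving a variant of it. Your approach is more modular and avoids the auxiliary induction entirely; the paper's approach, by contrast, makes explicit that only one family of simple-root inequalities is needed at each stage, which is a mildly finer statement but not one that is used later.
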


As before,
let $\lambda$ be an arbitrary weight and let
$a_{i,j}$ be the coefficient for the weight
$\varepsilon_i - \delta_j$ and $b_{k,l}$ be the coefficient of
the weight $\delta_k - \varepsilon_l$, where $i > j$ and $k > l$ and so
$$
\lambda = \left( \sum_{i > j} a_{i,j}(\varepsilon_i - \delta_j) \right) 
+ \left( \sum_{k > l} b_{k,l}(\delta_k - \varepsilon_l) \right).
$$
Before, we considered all the conditions of dominance all at once and
this was sufficient for the previous case.  Now, the conditions
will be considered in a particular order to achieve the result.

As noted, there is a canonical subalgebra $\mf{gl}(r|r) \subseteq
\mf{gl}(m|n)$ which we will denote as $\mf{g}_r$.   Furthermore,
$\mf{g}_r$ contains a distinguished parabolic subgroup as well which
will be denoted $\mf{p}_r$.  These subalgebras will provide
a useful reduction in this proof.

There are two main steps in the proof.  The first is to use an
induction argument to
eliminate the possibility of dominant roots in the portion of
$\mf{g}/(\ev{\mf{g}} \oplus \od{\mf{p}})$ isomorphic to
$\mf{g}_r/(\ev{(\mf{g}_r)} \oplus \od{(\mf{p}_r)})$ and so a variation on
the previous argument used here.  We proceed by induction on $r$.

If $r = 1$, then this first reduction step is trivial since $\od{(\mf{p}_r)}
= \od{(\mf{g}_r)}$ and so
$\mf{g}/(\ev{\mf{g}} \oplus \od{\mf{p}})$ is isomorphic to
$\mf{g}_1 / (\mf{g}_r)_1$ if $m > n$ and  $\mf{g}_{-1} / (\mf{g}_r)_{-1}$ if
$m < n$ which is the desired reduction.

If $r > 1$, then consider the conditions imposed by
$\langle \varepsilon_t - \varepsilon_{t+1}, \lambda \rangle \geq 0$ if
$m > n$, and consider
$\langle \delta_t - \delta_{t+1}, \lambda \rangle \geq 0$ if $m < n$,
where $1 \leq t \leq r-1$ in both cases.
For brevity, only the case where $m > n$ will be discussed,
as the proof for $m < n$ is very similar.

If $\lambda$ is an arbitrary weight as above, the inner products
$\langle \varepsilon_t - \varepsilon_{t+1}, \lambda \rangle$
for $1 \leq t \leq r-1$
have nontrivial interaction only with the part of the
weights which lies
in the subalgebra $\mf{g}_r/(\ev{(\mf{g}_r)} \oplus \od{(\mf{p}_r)})$.
Now, nearly the same technique as in Proposition \ref{P: glnn no dominant}
can be applied.

As before,
\begin{align*}
\langle \varepsilon_t - \varepsilon_{t+1}, a_{i,j}(\varepsilon_i - \delta_j) \rangle
& = \delta_{t, i}a_{i,j} - \delta_{t+1, i}a_{i,j} \\
\langle \varepsilon_t - \varepsilon_{t+1}, b_{i,j}(\delta_i - \varepsilon_j) \rangle
& = -\delta_{t, j}b_{i,j} + \delta_{t+1,j}b_{i,j} 
\end{align*}
and the inequalities given by $\langle \varepsilon_t - \varepsilon_{t+1}, \lambda \rangle \geq 0$ are again summed to yield
\begin{gather*}
\sum_{t=1}^{r-1} 
\langle \varepsilon_t - \varepsilon_{t+1}, \lambda \rangle  = 
\sum_{t=1}^{r-1} \sum_{i > j} 
\langle \varepsilon_t - \varepsilon_{t+1},\alpha_{i,j} + \beta_{i,j} \rangle
 \geq 0.
\end{gather*}
Since only half of the inequalities have been used in this case
(all the ones involving deltas have been left out),
each coefficient appears exactly once as a negative term and
the  $a_{i,j}$ and $b_{k,l}$ with $i < r$ and $l > 1$ appear
once as positive terms.  Rearranging the inequality gives
$$
0 \geq \sum_{t = 1}^n ( a_{r,t} +  b_{t,1} )
$$
which reduces to a case isomorphic to showing that there are no
dominant weights in $\mf{g}_{r-1}/(\ev{(\mf{g}_{r-1})} \oplus \od{(\mf{p}_{r-1})})$ which contains no dominant weights by the
inductive hypothesis which completes the first step.

Now, if a weight $\lambda$ is dominant, it must be a weight for
$\mf{g}_1 / (\mf{g}_r)_1$ since
all the other coefficients of $\lambda$ have been show to be 0.
This step is significantly easier since now
$$
\lambda = \sum_{\substack{  \ r < i  \leq m\\  m+1 < j \leq m+n}} a_{i,j}(\varepsilon_i - \delta_j)
$$
and the condition
$\langle \varepsilon_r - \varepsilon_{r+1}, \lambda \rangle \geq 0$
implies that
$$ 0 \geq \sum_{m+1 < j \leq m+n} a_{r+1, j}
$$
and so each of these coefficients is 0.  This process is repeated
stepwise for the conditions
$\langle \varepsilon_t - \varepsilon_{t+1}, \lambda \rangle \geq 0$
for $ r < t < m - 1$ which shows
that $a_{t, j} = 0$ for all  $m+1 < j \leq m+n$,
and finally, that $\lambda = 0$, which is not a dominant weight.  Thus,
no weight of $S^{\bullet} (\mf{g}/(\ev{\mf{g}} \oplus \od{\mf{p}}))^*$ is
dominant and the proof is complete.

\begin{corollary}
Let $\mf{p} \subseteq \mf{g} = \mf{gl}(m|n)$, then $
\Ind_{\mf{p}}^{\mf{g}} k \cong k$.
\end{corollary}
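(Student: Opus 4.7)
The plan is to combine the preceding proposition with the framework of \cite[Lemma 3]{GS-2010} invoked in the discussion above the proposition. Specializing that lemma to $\lambda = 0$ (so $\mc{L}_\lambda(\mf{p})$ is the trivial line bundle), any simple $\mf{g}$-module $L_\mu$ appearing as a composition factor of $\Ind_{\mf{p}}^{\mf{g}} k$ must satisfy the condition that $L_\mu(\ev{\mf{g}})^*$ occurs as a $G_0$-composition factor of $H^0(G_0/P_0, S^\bullet(\mf{g}/(\ev{\mf{g}} \oplus \od{\mf{p}}))^*)$. So the proof reduces to an analysis of this classical sheaf cohomology.

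Next I would invoke the Borel–Weil theorem on the partial flag variety $G_0/P_0$: a weight $\nu$ contributes to the global sections $H^0(G_0/P_0, \mc{L}_\nu)$ only when $\nu$ is dominant with respect to the Levi $P_0$, in which case it appears as the highest weight of the dual Weyl module. Consequently, every $G_0$-composition factor of $H^0(G_0/P_0, S^\bullet(\mf{g}/(\ev{\mf{g}} \oplus \od{\mf{p}}))^*)$ has highest weight equal to some dominant weight occurring in the symmetric algebra $S^\bullet(\mf{g}/(\ev{\mf{g}} \oplus \od{\mf{p}}))^*$.

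By the preceding proposition, no \emph{nonzero} weight of $S^\bullet(\mf{g}/(\ev{\mf{g}} \oplus \od{\mf{p}}))^*$ is dominant. The only dominant weight that appears is therefore $0$, coming exclusively from the degree $0$ component of the symmetric algebra, which is one-dimensional. Combining this with step one, the trivial module $k = L_0$ is the only possible composition factor of $\Ind_{\mf{p}}^{\mf{g}} k$, and the bound on the zero weight space forces it to occur with multiplicity at most one.

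Finally, to rule out the zero module and pin down the isomorphism, I would apply Frobenius reciprocity (or directly the fact that $\Gamma_0$ sends the trivial $\mf{p}$-module to a module containing a copy of the trivial $\mf{g}$-module via the constant section $1 \in H^0(G_0/P_0, \mc{O})$) to produce a nonzero map $k \to \Ind_{\mf{p}}^{\mf{g}} k$. Combined with the composition-factor analysis above, this forces $\Ind_{\mf{p}}^{\mf{g}} k \cong k$. The main obstacle, and the reason the preceding proposition was set up, is ensuring no nonzero dominant weight slips through in the symmetric algebra; everything else is essentially formal once that combinatorial fact is in hand.
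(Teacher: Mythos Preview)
Your proposal is correct and follows essentially the same route as the paper: use the preceding proposition to conclude that only the zero weight survives in $H^0(G_0/P_0, S^\bullet(\mf{g}/(\ev{\mf{g}} \oplus \od{\mf{p}}))^*)$, hence $\Ind_{\mf{p}}^{\mf{g}} k$ has only the trivial module as a composition factor, with multiplicity one. The paper's proof is terser---it simply asserts $H^0 \cong k$ once no dominant weights exist and then notes that parity is preserved---whereas you spell out the Borel--Weil step and add a Frobenius reciprocity/constant section argument to confirm the result is nonzero; this extra care is welcome but not a genuinely different approach.
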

\begin{proof}
Since $S^{\bullet} (\mf{g}/(\ev{\mf{g}} \oplus \od{\mf{p}}))^*$ has
no dominant weights by the previous lemma,
$$
H^0(G_0/P_0, S^{\bullet} (\mf{g}/(\ev{\mf{g}} \oplus \od{\mf{p}}))^*)
\cong k.
$$
Furthermore, note that the induction functor does not change the
parity of the module, so the degree (either even or odd) is fixed
and the result is proven.
\end{proof}


\begin{corollary} \label{C: Res is inj}
The restriction map
$$\op{res}^{T(\mf{g})}_{T(\mf{p})}: T(\mf{g}) \rightarrow T(\mf{p})$$ given by $M \mapsto M|_{\mf{p}}$
is injective.
\end{corollary}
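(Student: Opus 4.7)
The plan is to show that the kernel of the group homomorphism $\op{res}^{T(\mf{g})}_{T(\mf{p})}$ is trivial. Suppose $[M] \in \ker\left(\op{res}^{T(\mf{g})}_{T(\mf{p})}\right)$; then by definition $M|_{\mf{p}} \cong k_{ev} \oplus Q$ in $\F{p}$ for some module $Q$ which is projective in $\F{p}$, and the goal is to conclude that $M \cong k_{ev} \oplus R$ for some $R$ projective in $\F{g}$, so that $[M] = [k_{ev}] = 0$ in $T(\mf{g})$.

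The central tool will be the geometric induction functor $\Ind_{\mf{p}}^{\mf{g}}$ from \cite{GS-2010}, together with two of its expected properties. First, a projection formula: for any $\mf{g}$-module $V$ and any $\mf{p}$-module $W$, there is a natural isomorphism
$$\Ind_{\mf{p}}^{\mf{g}}(V|_{\mf{p}} \otimes W) \cong V \otimes \Ind_{\mf{p}}^{\mf{g}}(W).$$
Taking $W = k$ and combining with the previous corollary that $\Ind_{\mf{p}}^{\mf{g}}(k) \cong k$, this specializes to the clean identity $\Ind_{\mf{p}}^{\mf{g}}(V|_{\mf{p}}) \cong V$ for every $\mf{g}$-module $V$. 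Second, $\Ind_{\mf{p}}^{\mf{g}}$ should carry projective objects in $\F{p}$ to projective objects in $\F{g}$; this is to follow from the characterization of projectives as summands of induced modules in Proposition \ref{P: main}(\ref{P: rel proj are summands of induced}), together with transitivity of induction, which identifies $\Ind_{\mf{p}}^{\mf{g}}$ of a module induced from $\ev{\mf{p}}$ with the corresponding module induced from $\ev{\mf{g}}$.

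Granting these two ingredients, the argument concludes by a short formal calculation. Applying $\Ind_{\mf{p}}^{\mf{g}}$ to the isomorphism $M|_{\mf{p}} \cong k_{ev} \oplus Q$ and using additivity of the functor yields
$$M \cong \Ind_{\mf{p}}^{\mf{g}}(M|_{\mf{p}}) \cong k_{ev} \oplus \Ind_{\mf{p}}^{\mf{g}}(Q),$$
where the first isomorphism is the specialized projection formula and the identification $\Ind_{\mf{p}}^{\mf{g}}(k_{ev}) \cong k_{ev}$ uses the parity-preserving remark from the previous corollary. Since the second summand is projective in $\F{g}$, we have $[M] = [k_{ev}]$, which proves injectivity.

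The main technical obstacle is verifying the two functorial properties of $\Ind_{\mf{p}}^{\mf{g}}$ in the super setting. The projection formula in its strong (non-derived) form typically requires some exactness of the induction functor on the relevant class of modules; one must therefore either establish vanishing of the higher sheaf cohomologies $H^i(G_0/P_0, \mc{L}(-))$ on $V|_{\mf{p}}$ and $Q$, or pass to the derived category and extract the stable isomorphism modulo projectives at the end. Similarly, the projectivity preservation hinges on showing that geometric induction from $\mf{p}$ is compatible with the algebraic characterization of projectives recorded in Proposition \ref{P: main}. Once these two points are in place, the remainder of the argument is essentially formal.
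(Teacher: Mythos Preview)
Your approach is essentially the paper's: apply the geometric induction functor $\Ind_{\mf{p}}^{\mf{g}}$ to $M|_{\mf{p}} \cong k_{ev} \oplus Q$, use the tensor identity (which the paper simply cites as \cite[Lemma 1]{GS-2010}, so your worries about exactness are unnecessary) together with $\Ind_{\mf{p}}^{\mf{g}} k_{ev} \cong k_{ev}$ to get $M \cong k_{ev} \oplus \Ind_{\mf{p}}^{\mf{g}} Q$.

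The one place you diverge is the finishing move, and the paper's is cleaner. You want to conclude $[M]=[k_{ev}]$ by proving that $\Ind_{\mf{p}}^{\mf{g}} Q$ is projective in $\F{g}$, and you correctly flag this as a nontrivial verification (your proposed justification via ``transitivity of induction'' is shaky, since geometric induction is not the naive tensor induction and the comparison with $U(\mf{g})\otimes_{U(\ev{\mf{g}})}-$ would itself need work). The paper sidesteps this entirely: since every class in $T(\mf{g})$ has an indecomposable representative, one may assume $M$ is indecomposable from the start; then $M \cong k_{ev} \oplus \Ind_{\mf{p}}^{\mf{g}} Q$ forces $\Ind_{\mf{p}}^{\mf{g}} Q = 0$ and $M \cong k_{ev}$ on the nose. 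This eliminates your ``main technical obstacle'' without any further analysis of the induction functor.
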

\begin{proof}
By Corollary \ref{C: T(g) to T(p)}, it is sufficient to check that $\ker \left( \op{res}^{T(\mf{g})}_{T(\mf{p})} \right)=
\{ k_{ev} \}$.

Let $M \in T(\mf{g})$ be an indecomposable endotrivial
module in $\F{g}$ such that $M|_{\mf{p}} \cong k_{ev} \oplus P$.  Then
\begin{equation*}
\Ind_{\mf{p}}^{\mf{g}} M|_{\mf{p}} \cong
	\Ind_{\mf{p}}^{\mf{g}} (k_{ev} \oplus P) \cong
	\Ind_{\mf{p}}^{\mf{g}} k_{ev} \oplus \Ind_{\mf{p}}^{\mf{g}} P
	\cong k_{ev} \oplus \Ind_{\mf{p}}^{\mf{g}} P.
\end{equation*}
However, since $M$ is already a $\mf{g}$-module, by the
tensor identity given in \cite[Lemma 1]{GS-2010}, 
\begin{equation*}
\Ind_{\mf{p}}^{\mf{g}} M|_{\mf{p}} \cong
\Ind_{\mf{p}}^{\mf{g}} (M|_{\mf{p}} \otimes k_{ev}) \cong
M \otimes \Ind_{\mf{p}}^{\mf{g}}  k_{ev} \cong
M \otimes k_{ev} \cong M
\end{equation*}
and so we have that, as $\mf{g}$-modules,
$M \cong k_{ev} \oplus \Ind_{\mf{p}}^{\mf{g}} P$.
Since $M$ is indecomposable, $M \cong k_{ev}$ and 
thus the map $\op{res}^{T(\mf{g})}_{T(\mf{p})}$ is injective.
\end{proof}


\section{Classification of $T(\mf{gl}(m|n))$} \label{S: classification of T(g)}

\subsection{Classification of $T(\mf{p})$}

Using Corollary \ref{C: Res is inj}, we will use the classification of $T(\mf{p})$
to yield a classification of $T(\mf{g})$.  The computation of
$T(\mf{p})$ is derived by considering the kernel of
the restriction map given in Corollary \ref{C: T(p) to	 T(f)}.

For the remainder of the section let $\mf{g} = \mf{gl}(m|n)$ with
maximal torus $\ev{\mf{t}}$ and let $\mf{p}$
be the distinguished parabolic subalgebra of $\mf{g}$ such that
$\mf{f} \subseteq \mf{p} \subseteq \mf{g}$ and
$\ev{\mf{f}} = \mf{t_f} \subseteq \mf{t_p} = \ev{\mf{t}}$.
Let
$r = \min(m,n)$ and $s = |m - n|$.

\begin{theorem} \label{T: classification of T(p)}
There are isomorphisms of groups
\begin{enumerate}
\item $T(\mf{f}) \cong k^s \times \Z \times \Z_2$;
\item $T(\mf{p}) \cong k^{r+s} \times \Z \times \Z_2$.
\end{enumerate}
\end{theorem}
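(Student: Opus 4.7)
I would prove (1) and (2) in sequence, deducing (2) from (1) by analyzing the restriction homomorphism of Corollary \ref{C: T(p) to T(f)}.

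For (1), decompose $\mf{f}$ as a direct sum of Lie superalgebras $\mf{a} \oplus \mf{t}_s$, where $\mf{a} \cong \mf{sl}(1|1)^{\oplus r}$ is the subalgebra generated by the strictly odd-diagonal basis vectors (its even part being the span of the super-commutators $h_i = e_{i,i} + e_{m+i,m+i}$), and $\mf{t}_s$ is the auxiliary $s$-dimensional torus. One checks directly that $\mf{t}_s$ commutes with the odd-diagonal generators, so it is central in $\mf{f}$, giving $U(\mf{f}) \cong U(\mf{a}) \otimes U(\mf{t}_s)$ as superalgebras. Because objects of $\F{f}$ are completely reducible over $\mf{t_f} \supseteq \mf{t}_s$, every indecomposable $M \in \F{f}$ carries a single $\mf{t}_s$-weight $\lambda \in \mf{t}_s^* \cong k^s$ and factors as $M \cong k_\lambda \otimes M'$, where $M'$ is an $\mf{a}$-module with trivial $\mf{t}_s$-action. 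Indecomposable projectives in $\F{f}$ correspond bijectively to $k_\lambda \otimes P'$ with $P'$ an indecomposable projective for $\mf{a}$, so $M$ is endotrivial in $\F{f}$ iff $M'$ is endotrivial for $\mf{a}$, yielding $T(\mf{f}) \cong k^s \times T(\mf{a})$. Combining with $T(\mf{a}) \cong \Z \times \Z_2$ from \cite{Talian-2013} completes (1).

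For (2), I would compute the kernel and image of $\op{res} := \op{res}^{T(\mf{p})}_{T(\mf{f})}$. Since $\ev{\mf{p}}$ is solvable, the highest-weight proposition proved earlier in this section forces every simple in $\F{p}$ to be one-dimensional; hence any indecomposable endotrivial $M$ has head $k_\mu$ for some character $\mu$ of $\mf{p}$. If $[M] \in \ker(\op{res})$, stable triviality of $M|_{\mf{f}}$ forces $\mu|_{\mf{t_f}} = 0$. Replacing $M$ by $M \otimes k_{-\mu}$, assume $M$ has head $k_{ev}$ and $M|_{\mf{f}} \cong k_{ev} \oplus P$ with $P$ projective in $\F{f}$. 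Setting $N = \ker(M \twoheadrightarrow k_{ev})$, we have $N|_{\mf{f}} \cong P$ projective, so by Corollary \ref{C: f detects p proj} $N$ is projective in $\F{p}$, and hence injective by Proposition \ref{P: main}(c). The sequence $0 \to N \to M \to k_{ev} \to 0$ therefore splits, and the indecomposability of $M$ forces $N = 0$ and $M \cong k_{ev}$. Undoing the twist shows that every kernel class is represented by a one-dimensional character $k_\mu$ with $\mu|_{\mf{t_f}} = 0$. Since characters of $\mf{p}$ are precisely the elements of $(\ev{\mf{t}})^*$ vanishing on the $r$-dimensional subspace $\operatorname{span}(h_i) = [\od{\mf{p}}, \od{\mf{p}}] \cap \ev{\mf{t}}$, the subset of such characters additionally vanishing on $\mf{t}_s$ has dimension $(m+n) - (r+s) = r$, so $\ker(\op{res}) \cong k^r$.

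Surjectivity of $\op{res}$ follows from explicit lifts: any character of $\mf{t_f}$ extends to a character of $\ev{\mf{t}}$ and hence to $\mf{p}$, hitting the $k^s$ factor; Lemma \ref{L: restriction commutes} identifies $\Omega^n_{\mf{p}}(k_{ev})|_{\mf{f}}$ with $\Omega^n_{\mf{f}}(k_{ev})$ modulo a projective, producing the $\Z$ factor; and the parity-change functor commutes with restriction. These lifts together with the kernel split the sequence $0 \to k^r \to T(\mf{p}) \to T(\mf{f}) \to 0$, giving $T(\mf{p}) \cong k^r \times T(\mf{f}) \cong k^{r+s} \times \Z \times \Z_2$. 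The principal obstacle is the kernel computation, which crucially combines (i) the one-dimensionality of simples in $\F{p}$ coming from the solvability of $\ev{\mf{p}}$, and (ii) the projectivity-detection result of Corollary \ref{C: f detects p proj}; without either ingredient, there would be no clean way to reduce an arbitrary kernel element to a one-dimensional character.
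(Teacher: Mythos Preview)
Your treatment of part (1) and the overall strategy for (2)---computing the kernel and image of $\op{res}^{T(\mf{p})}_{T(\mf{f})}$---match the paper's, and your surjectivity argument is essentially fine (the phrase ``any character of $\mf{t_f}$ extends to a character of $\mf{p}$'' is imprecise, since a $\mf{p}$-character must kill each $h_i \in [\od{\mf{p}},\od{\mf{p}}]$, but the characters you actually need for the $k^s$ factor do extend). The real gap is in the kernel computation.

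The step ``Setting $N = \ker(M \twoheadrightarrow k_{ev})$, we have $N|_{\mf{f}} \cong P$ projective'' does not follow. Restricting the short exact sequence to $\mf{f}$ yields a surjection $M|_{\mf{f}} \cong k_{ev} \oplus P \twoheadrightarrow k_{ev}$, but nothing forces this $\mf{f}$-map to be nonzero on the $k_{ev}$ summand. If that component vanishes, the kernel is $k_{ev} \oplus \ker(\beta)$ for some surjection $\beta \colon P \to k_{ev}$, and $\ker(\beta)$ contains an $\Omega^1_{\mf{f}}(k_{ev})$ summand---so $N|_{\mf{f}}$ is not projective and Corollary \ref{C: f detects p proj} does not apply. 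Put differently, the trivial $\mf{f}$-summand of $M|_{\mf{f}}$ could lie entirely inside the $\mf{p}$-radical of $M$, and your argument provides no mechanism to exclude this. (A smaller issue: indecomposability of $M$ does not imply a \emph{simple} head even when all simples are one-dimensional, so the very first move ``$M$ has head $k_\mu$'' is also unjustified; but this is secondary to the projectivity gap.)

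The paper circumvents this with a weight-filtration argument rather than a head argument. Because $[\mf{t_f},\od{\mf{f}}]=0$, the $\mf{t_f}$-weight decomposition $M=\bigoplus_{\lambda} M_\lambda$ is an $\mf{f}$-module decomposition; the $k_{ev}$ summand lies in $M_0$, and every $M_\lambda$ with $\lambda\neq 0$ is projective over $\mf{f}$. Using the dominance order on $X(\mf{t_f})$ and the fact that $\mf{u}=\mf{p}/\mf{t_p}$ strictly raises weights, $\hat{M}=\bigoplus_{\lambda\nleq 0} M_\lambda$ is a $\mf{p}$-submodule whose $\mf{f}$-restriction is projective; Corollary \ref{C: f detects p proj} then makes $\hat{M}$ a projective (hence injective) $\mf{p}$-summand, forcing $\hat{M}=0$. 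A dual step on $M/M_0$ gives $M=M_0$, so $\mf{u}.M=0$ and the $\mf{f}$-splitting $k_{ev}\oplus P$ is already a $\mf{p}$-splitting; indecomposability yields $M\cong k_{ev}$. The missing idea in your approach is to pin down the location of the trivial $\mf{f}$-summand via the $\mf{t_f}$-grading, where one has genuine control, instead of via the $\mf{p}$-head, where one does not.
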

\begin{proof}
First, we recall some details about $\mf{f}$.
It is important to note that in this more general setting, 
the detecting subalgebra $\mf{f}$ may not be isomorphic to
$\mf{sl}(1|1) \times \dots \times \mf{sl}(1|1)$ as is assumed in \cite{Talian-2013}
because there are more entries on the even diagonal.  
If $m \neq n$, then define
$\mf{f} \cong \mf{f}_r \oplus \mf{t}_s$ where $\mf{f}_r$ is a direct sum of
$r$ copies of $\mf{sl}(1|1)$, the detecting subalgebra of $\mf{gl}(r|r) \subseteq \mf{gl}(m|n)$,
and $\mf{t}_s$ is an $s$ dimensional torus generated by the remaining diagonal
entries whose span does not intersect the torus of $\mf{f}_r$.  Thus,
the classification of $\mf{f}$-endotrivial modules in \cite{Talian-2013} must be modified
slightly.

First let $L$ be an indecomposable endotrivial module in $\frel{f}$.
The classification given in \cite[Theorem 6.2]{Talian-2013}
indicates that
$L|_{\mf{f}_r} \cong \Omega^i_{\mf{f}_r}(k|_{\mf{f}_r}) \oplus P'$ where
$P'$ is some projective $\mf{f}_r$-module, $k|_{\mf{f}_r}$ is either
$k_{ev}$ or $k_{od}$, and $n \in \Z$.  Since $\mf{f}$ is just $\mf{f}_r$
with an enlarged torus, the structure of $\mf{f}$-modules is not
fundamentally different, there are just more weights in $\mf{f}$.
So an endotrivial $\mf{f}$-module is an endotrivial $\mf{f}_r$-module
by restriction, which has an action of an $s$ dimensional torus
that may act by any weight
because $[\mf{f}, \mf{f}] \cap \mf{t}_s = 0$ (see 
\cite[Proposition 7.4]{Talian-2013} for more on this topic).  We may
then conclude that
$L \cong \Omega^i_{\mf{f}}(k_\lambda|_{\mf{f}}) \oplus P'$
where $P'$ is a projective $\mf{f}$-module.
and $\lambda \in X(\ev{(\mf{f}_r)} \oplus \mf{t}_s)$ of the form
$(0, \dots, 0, \lambda_{2r+1}, \dots, \lambda{_{n+m}})$, i.e.,
$k_\lambda|_{\mf{f}_r} \cong k$ the trivial module concentrated
in even or odd degree.  Thus, $T(\mf{f}) \cong k^s \times \Z \times \Z_2$
in this setting.

Given this observation, consider the map $\op{res}^{T(\mf{p})}_{T(\mf{f})}: T(\mf{p}) \rightarrow T(\mf{f})$ and in particular, its kernel.
Let $M$ be an indecomposable endotrivial module in $\F{p}$ such that
$M|_{\mf{f}} \cong k_{ev} \oplus P$.  Now, consider the weight space decomposition of
$M|_{\mf{f}}$ relative to the weights of $\mf{t_f}$.  Then
\begin{equation} \label{E:f decomp}
M \cong \bigoplus_{\lambda \in X(\mf{t_f})} M_\lambda
\end{equation}
is not only a direct sum over $\mf{t_f}$, but as a module in $\F{f}$ since
$[ \mf{t_f}, \od{\mf{f}} ] = 0$.  Thus, by comparing the
direct summands of $M|_{\mf{f}} \cong k_{ev} \oplus P$
with those in Equation \ref{E:f decomp}, we note that the
only non-projective summand $k_{ev}$ occurs in the block $M_0$ and
thus $M_\lambda$
is projective in $\F{f}$ when $\lambda \neq 0$.

Define $\mf{u} = \mf{p} / \mf{t_p}$ and consider the action of $\mf{u}$ on $M$ relative
to the direct sum decomposition of $M$ over $\mf{f}$.  For $u \in \mf{u}$,
$u.M_\lambda \subseteq M_\mu$ implies that $\mu > \lambda$
in the dominance ordering of
$X(\mf{t_f})$ defined in Equation \ref{E:Dominance ordering}.
Define 
$$
\hat{M} = \bigoplus_{\substack{\lambda \in X(\mf{t_f}) \\ \lambda \nleq 0 }} M_\lambda
$$
then $\hat{M}$ is a $\mf{p}$-submodule of $M$ by construction.  Observe that 
if $\hat{M} \neq 0$, then $\hat{M}$ is a module in $\F{p}$ such that $\hat{M}|_{\mf{f}}$
is projective in $\F{f}$ and so by Corollary \ref{C: f detects p proj}, $\hat{M}$ is
projective in $\F{p}$.
Since $\F{p}$ is self injective, projective modules are
also injective and
so this gives a splitting
$M \cong \hat{M} \oplus \hat{M}^c$ which is a contradiction
since $M$ was assumed to be indecomposable and so $\hat{M} = 0$.

Then $M$ decomposes as
$$
M \cong \bigoplus_{\substack{\lambda \in X(\mf{t_f}) \\ \lambda \leq 0 }} M_\lambda
$$
in $\F{f}$.  Note that $M_0$ is a $\mf{p}$-submodule of $M$ and define $\tilde{M} = M / M_0$.
Again, $\tilde{M}$ is a $\mf{p}$-module which is projective when restricted to $\mf{f}$.
Thus
$M \cong \tilde{M} \oplus \tilde{M}^c$ and we conclude $\tilde{M} = 0$ as well.

So it must be that $M = M_0$ and so $\mf{u}.M = 0$.  Since $M$ is in $\F{p}$, which by definition
has a weight space decomposition relative to $\mf{t_p}$, and since
$\mf{p} \cong \mf{t_p} \oplus \mf{u}$, the decomposition
$M|_{\mf{f}} \cong k_{ev} \oplus P$ is also a decomposition over $\mf{p}$.  Thus,
$M|_{\mf{f}} \cong k_{ev}$ and so $M$ is a one dimensional module whose weights over
$\mf{t_p}$ collapse to the trivial weight when restricted to $\mf{t_f}$.

So $M \cong k_\lambda$ where $\lambda = (\lambda_1, \dots, \lambda_r, \lambda_{r+1},
\dots, \lambda_{2r}, 0, \dots, 0)$ and
$\lambda_i = - \lambda_{r + i}$ for $i = 1, \dots, r$ and we see that
$\ker (\op{res}^{T(\mf{p})}_{T(\mf{f})}) \cong k^r$.

Recalling the notation $\mf{f} \cong \mf{f}_r \oplus \mf{t}_s$, we now have shown that
$T(\mf{p}) \cong k^{r+s} \times \Z \times \Z_2$, generated by
$\Omega^1_{\mf{p}}(k_{ev})$, $k_\lambda$ such that $k_\lambda|_{\mf{f_r}} \cong k_{ev}$ and the parity change functor.
\end{proof}

\subsection{Classification of $T(\mf{gl}(m|n))$}
The final step in the classification results from making a few observations
about the
results of Corollary \ref{C: Res is inj} and Theorem \ref{T: classification of T(p)}.


\begin{theorem} \label{T: classification of T(g)}
There is an isomorphism of groups
$$T(\mf{g}) \cong k \times \Z \times \Z_2.$$
\end{theorem}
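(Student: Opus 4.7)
The plan is to exploit the injection \(\op{res}^{T(\mf{g})}_{T(\mf{p})}: T(\mf{g}) \hookrightarrow T(\mf{p}) \cong k^{r+s} \times \Z \times \Z_2\) of Corollary \ref{C: Res is inj} and identify its image. First, I produce a natural subgroup \(A \leq T(\mf{g})\) of the claimed shape: \(A\) is generated by the one-dimensional \(\mf{g}\)-modules \(k_c\), parameterized by \(c \in k\) via the supertrace character \(c \cdot \op{str}\), together with the first syzygy \(\Omega^1_{\mf{g}}(k_{ev})\) and the parity change functor \(\Pi\). Lemma \ref{L: restriction commutes} shows that restriction sends \(\Omega^n_{\mf{g}}(k_{ev})\) to \(\Omega^n_{\mf{p}}(k_{ev})\) modulo projectives in \(\mc{F}_{(\mf{p},\ev{\mf{p}})}\), and sends \(k_c\) to the weight \((c,\ldots,c,-c,\ldots,-c)\) inside the \(k^{r+s}\) factor of \(T(\mf{p})\). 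Since these three families of images lie in distinct factors of \(T(\mf{p})\), restriction is already injective on \(A\), so \(A \cong k \times \Z \times \Z_2\).

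For \(T(\mf{g}) = A\), let \(M\) be an indecomposable endotrivial \(\mf{g}\)-module. By Theorem \ref{T: classification of T(p)} there exist a weight \(\lambda\) of the admissible form, an integer \(n\), and \(\epsilon \in \{0,1\}\) such that \(M|_{\mf{p}} \cong \Pi^\epsilon(k_\lambda \otimes \Omega^n_{\mf{p}}(k_{ev})) \oplus Q'\) for some projective \(Q'\) in \(\mc{F}_{(\mf{p},\ev{\mf{p}})}\). Replacing \(M\) by the indecomposable non-projective summand of \(M \otimes \Omega^{-n}_{\mf{g}}(k_{ev}) \otimes \Pi^{-\epsilon}\) (which lies in \(T(\mf{g})\) and differs from \(M\) by an element of \(A\)), we may assume \(M|_{\mf{p}} \cong k_\lambda \oplus Q\) with \(Q\) projective in \(\mc{F}_{(\mf{p},\ev{\mf{p}})}\). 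It then suffices to show \(\lambda\) has the \(\mf{g}\)-character form \((c,\ldots,c,-c,\ldots,-c)\); once established, \(k_\lambda\) is already a \(\mf{g}\)-module and injectivity of restriction gives \([M] = [k_\lambda] \in A\).

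This final step mirrors the proof of Corollary \ref{C: Res is inj}. The tensor identity from \cite[Lemma 1]{GS-2010} yields
\[
M \cong \Ind^{\mf{g}}_{\mf{p}}(M|_{\mf{p}}) \cong \Ind^{\mf{g}}_{\mf{p}}(k_\lambda) \oplus \Ind^{\mf{g}}_{\mf{p}}(Q),
\]
so indecomposability of \(M\) forces one of these summands to vanish. The composition-factor description from \cite[Lemma 3]{GS-2010} pins down \(\Ind^{\mf{g}}_{\mf{p}}(k_\lambda)\) via the dominant weights of \(\mc{L}^*_\lambda(\mf{p}) \otimes S^\bullet (\mf{g}/(\ev{\mf{g}} \oplus \od{\mf{p}}))^*\). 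Combining this with the Weyl-group (\(S_m \times S_n\))-invariance of the weights of \(M\) as an \(\ev{\mf{g}}\)-module and running a weight-sum argument in the spirit of Proposition \ref{P: glnn no dominant} forces \(\lambda\) to lie in the one-parameter subspace of \(\mf{g}\)-characters; in that case the tensor identity gives \(\Ind^{\mf{g}}_{\mf{p}}(k_\lambda) \cong k_\lambda\), so \(M \cong k_\lambda \in A\).

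The main obstacle is the concluding weight-sum step: adapting the dominant-weight combinatorics of Proposition \ref{P: glnn no dominant} (carried out there only for \(\lambda = 0\)) to the twisted setting \(\mc{L}^*_\lambda(\mf{p}) \otimes S^\bullet (\mf{g}/(\ev{\mf{g}} \oplus \od{\mf{p}}))^*\), and ruling out any \(\lambda\) outside the one-parameter family of \(\mf{g}\)-characters as the character part of an indecomposable endotrivial \(\mf{g}\)-module with restriction equivalent to \(k_\lambda\).
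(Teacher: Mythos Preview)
Your strategy is exactly the paper's: exhibit $A = \langle k_c,\ \Omega^1_{\mf{g}}(k_{ev}),\ \Pi \rangle \cong k \times \Z \times \Z_2$ inside $T(\mf{g})$, and use injectivity of $\op{res}^{T(\mf{g})}_{T(\mf{p})}$ together with the classification $T(\mf{p}) \cong k^{r+s} \times \Z \times \Z_2$ to conclude $T(\mf{g}) = A$ by identifying the image.

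The divergence is only in the final step, where you reduce to an indecomposable $M'$ with $M'|_{\mf{p}} \cong k_\lambda \oplus Q$ and ask whether $\lambda$ must lie in the one-parameter supertrace family. You propose attacking this via $\Ind_{\mf{p}}^{\mf{g}}(k_\lambda)$ and a twisted analogue of the Proposition~\ref{P: glnn no dominant} dominant-weight combinatorics, together with $W$-invariance of the weights of $M'$, and you honestly flag this as the unfinished obstacle. The paper does \emph{not} run any such argument. Its treatment is far terser: it simply records that the one-dimensional $\mf{g}$-modules form a single one-parameter family (because $[\mf{g},\mf{g}] \cap \ev{\mf{t}}$ is strictly larger than $[\mf{p},\mf{p}] \cap \ev{\mf{t}}$), notes that each such $k_\lambda$ restricts to the $\mf{p}$-module $k_\lambda$ since the torus is shared, and then declares the image of restriction computed. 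In other words, the paper identifies which $k_\lambda$ in the $k^{r+s}$ factor are restrictions of one-dimensional $\mf{g}$-modules and takes that to be the entire intersection of the image with $k^{r+s}$; it does not separately rule out a higher-dimensional endotrivial $M'$ with $M'|_{\mf{p}} \cong k_\lambda \oplus (\text{projective})$ for $\lambda$ outside that family. So the obstacle you identify is not resolved by a more elaborate argument in the paper---it is simply passed over. Your proposed induction-functor route is a reasonable way to make that step rigorous, but it goes well beyond what the paper actually does.
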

\begin{proof}
It was shown that the map $\op{res}^{T(\mf{g})}_{T(\mf{p})}: T(\mf{g}) \rightarrow T(\mf{p})$ is injective, but now that $T(\mf{p})$ has been classified, the image
of the injection can be computed directly.

By
Lemma \ref{L: restriction commutes},
$$
\Omega^i_{\mf{g}}(M)|_{\mf{p}} \cong \Omega^i_{\mf{p}}(M|_{\mf{p}})
\oplus P
$$
where $P$ is a projective module in $\F{p}$ and so in the respective stable module
categories (where $T(\mf{g})$ and $T(\mf{p})$ are defined), the syzygy operation commutes
with restriction.  Additionally, the parity change functor commutes as well and
so it is clear that $\Omega^1_{\mf{g}}(k_{ev})$ and parity change functor generate
a subgroup of $T(\mf{g})$ isomorphic to $\Z \times \Z_2$.

The last factor of $T(\mf{p})$ is $k^{r+s}$ which arises from the one dimensional
modules in $\F{p}$.  There are fewer one dimensional modules in $\mf{g}$ (except
when $m = n = 1$ in which case $\mf{g} = \mf{p}$) because
$[\mf{p}, \mf{p}] \cap \ev{\mf{t}} \subseteq [\mf{g}, \mf{g}] \cap \ev{\mf{t}}$.
In $\mf{g}$ there is only one parameter of one dimensional modules whose weights
relative to $\ev{\mf{t}}$ are $(\lambda_1, \dots, \lambda_m, \lambda_{m+1}, \dots,
\lambda_{m+n})$ where $\lambda_i = -\lambda_j$ for all
$1 \leq i \leq m < j \leq m+n$.

Noting that $\mf{p}$ and $\mf{g}$ share the same torus, $k_\lambda \in \F{g}$ restricts
to $k_\lambda \in \F{p}$, and we now have a complete description of the image of the
restriction map.  An arbitrary indecomposable endotrivial module in $\F{g}$
is of the form $\Omega^i_{\mf{g}}(k_\lambda)$ for $\lambda \in X(\ev{\mf{t}})$
such that $\lambda = (\lambda_1, \dots, \lambda_m, \lambda_{m+1}, \dots,
\lambda_{m+n})$ where $\lambda_i = -\lambda_j$ for
all
$1 \leq i \leq m < j \leq m+n$ and
$$T(\mf{g}) \cong k \times \Z \times \Z_2 \hookrightarrow T(\mf{p}) \cong k^{r+s} \times \Z \times \Z_2 $$
where the injection is given by restriction .  Thus, 
$T(\mf{g})$ is
generated by $\Omega^1_{\mf{g}}(k_{ev})$, $k_\lambda$
concentrated in even degree where $\lambda$ is
as above, and the parity
change functor.
\end{proof}

\section*{Acknowledgments}
The author would like to express sincere thanks to
Daniel Nakano for his suggestions about how to approach the questions dealt with in this
work.  Without his many helpful insights, this paper would not be possible.  The author would also like to thank
Jonathan Kujawa for providing assistance and
feedback on drafts on this paper
which improved the clarity and quality of the writing.
Finally, the author is grateful to Institut Mittag-Leffler
for their hospitality and support during the completion of this project.

\end{document}